\newcommand{\ones}{\mathbf{1}}
\newcommand{\disp}{\mathrm{dis}_p^{\mathrm{GW}}(\pi)}
\newcommand{\gw}{d_{\mathrm{GW},p}}
\newcommand{\discp}{\mathrm{dis}^{\mathrm{COOT}}_p}
\newcommand{\Proj}{\operatorname{Proj}}
\newcommand{\coot}{d_{\mathrm{COOT},p}}
\newcommand{\R}{\mathbb{R}}
\newcommand{\TpOT}{\mathrm{TpOT}}
\newcommand{\PD}{\mathrm{PD}}
\newcommand{\MPD}{\mathrm{MPD}}
\newcommand{\GW}{\mathrm{GW}}
\newcommand{\adm}{\mathrm{adm}}
\newcommand{\KL}{\operatorname{KL}}
\newcommand{\Lc}{\mathsf{L}}
\renewcommand{\tilde}{\widetilde}
\newcommand{\diff}{\mathrm{d}}
\newcommand{\rrrev}[1]{{#1}}
\theoremstyle{thmstyleone}
\newtheorem{theorem}{Theorem}
\newtheorem{proposition}[theorem]{Proposition}
\theoremstyle{thmstyletwo}
\newtheorem{example}{Example}
\newtheorem{remark}{Remark}
\newtheorem{lemma}{Lemma}
\newtheorem{corollary}{Corollary}
\theoremstyle{thmstylethree}
\newtheorem{definition}{Definition}
\begin{document}

\title{Topological Optimal Transport for Geometric Cycle Matching}

\author[1,2]{Stephen Y Zhang}
\author[1,2,3]{Michael P H Stumpf}
\author[4]{Tom Needham}
\author[5]{Agnese Barbensi}
\affil[1]{\small School of Mathematics and Statistics, the University of Melbourne, Parkville, Melbourne, 3010, VIC, Australia}
\affil[2]{\small Melbourne Integrative Genomics, the University of Melbourne, Parkville, Melbourne, 3010, VIC, Australia}
\affil[3]{\small School of Biosciences, the University of Melbourne, Parkville, Melbourne, 3010, VIC, Australia}
\affil[4]{\small Mathematics Department, Florida State University, Academic Way, Tallahassee, 32306, FL, USA}
\affil[5]{\small School of Maths and Physics, University of Queensland, St Lucia, Brisbane, 4067, QLD, Australia}

\maketitle

\begin{abstract}
  Topological data analysis is a powerful tool for describing topological signatures in real world data. An important challenge in topological data analysis is the task of matching significant topological signals across distinct systems. Optimal transportation provides a geometric and probabilistic approach to formalising notions of distances and matchings between measures and structured objects more generally.
  Building upon recent advances in the domains of persistent homology and optimal transport for hypergraphs, we develop an approach for finding geometrically and topologically informed matchings between point cloud datasets and their topological features.
  We define measure topological networks, which integrate both geometric and topological information about a system, and introduce a distance on the space of these objects.
  Our distance is defined in terms of a Topological Optimal Transport (TpOT) problem which seeks to transport mass that preserves geometric as well as topological relations, in the sense of minimising the induced geometric and topological distortions.
  We study the metric properties of this distance and show that it induces a geodesic metric space of non-negative curvature. We demonstrate our approach in a number of numerical experiments.
\end{abstract}

\section{Introduction}\label{sec1}

Topological data analysis (TDA) is a quickly growing field in computational and applied topology. In recent years, TDA has established itself as an effective framework to analyse, cluster, and detect patterns in complex data~\cite{wasserman2018topological}. One of the key algorithms in TDA is persistent homology (PH), a computational tool that describes the structure of data based on topological features persisting across different scales~\cite{otter2017roadmap, edelsbrunner2010computational, ghrist2008barcodes}. In brief, PH computation proceeds by building a nested sequence of discrete spaces describing the input data at increasingly coarse scales, known as a \textit{filtration} of simplicial complexes. The topological features of this filtration are then quantified by computing the homology groups of each simplicial complex in the sequence. A \emph{structural theorem}~\cite{zomorodian2012topological} guarantees that the birth, death, and evolution of homology classes in the filtration can be summarised as a multi-set, called a \textit{persistence diagram} (PD). PDs have been shown to contain rich information about the initial data, and PH-based data analysis approaches appear in an ever-increasing number of applications across multiple fields in modern science~\cite{saggar2018towards, sorensen2020revealing, vipond2021multiparameter, thorne2022topological}. 

Optimal transport (OT) is a mathematical framework that, in its classical formulation, formalises the problem of finding a matching between two given probability distributions that is optimal in terms of a prescribed cost~\cite{villani2009optimal, peyre2019computational}. When the cost function is induced by the distance between points in a metric space $(X,d)$, optimal transport provides a natural lifting of the ground metric to a metric on the space of probability distributions supported on $X$, commonly known as the \textit{Wasserstein} distance (see for instance, \cite[Theorem 6.18]{villani2009optimal}). An extension of the optimal transportation problem which has recently received considerable attention is the \emph{Gromov-Wasserstein} (GW) problem~\cite{memoli2007,memoli2011gromov, chowdhury2019gromov,sturm2023space}, in which the probability distributions to be matched are defined over different metric spaces, rather than supported on a common space. The objective is then to find matchings that minimise \emph{distortion} of pairwise distances. While the GW distance was originally introduced to compare metric measure spaces and can be viewed as a relaxation of the well known Gromov-Hausdorff distance~\cite{memoli2007}, it has seen recent success in matching more general structured objects such as (labelled) graphs~\cite{xu2019gromov, xu2019scalable, chowdhury2020gromov, vayer2020fused}. Among other advantages, GW distances and matchings have brought substantial improvements to graph processing tasks over classical methods, and can be efficiently approximated by a range of numerical algorithms in practice~\cite{xu2019scalable,peyre2016gromov,flamary2021pot,chowdhury2021quantized}. On the theoretical front, the space of \emph{gauged measure spaces} (i.e., measure spaces $(X, \mu)$ together with a symmetric gauge function $k : X \times X \to \mathbb{R}$) endowed with the GW distance has the geometry of an Alexandrov space, and a Riemannian orbifold structure can be developed in this setting~\cite{sturm2023space, chowdhury2020gromov}. More recently, inspired by the goal of encoding relations in complex systems as higher-order networks, a variant of the GW problem, referred to as \emph{co-optimal transport}~\cite{redko2020co}, was applied to model hypergraphs both from a theoretical and applied point of view~\cite{chowdhury2023hypergraph}. 

\begin{figure}[ht]
    \centering
    \includegraphics[width=0.6\textwidth]{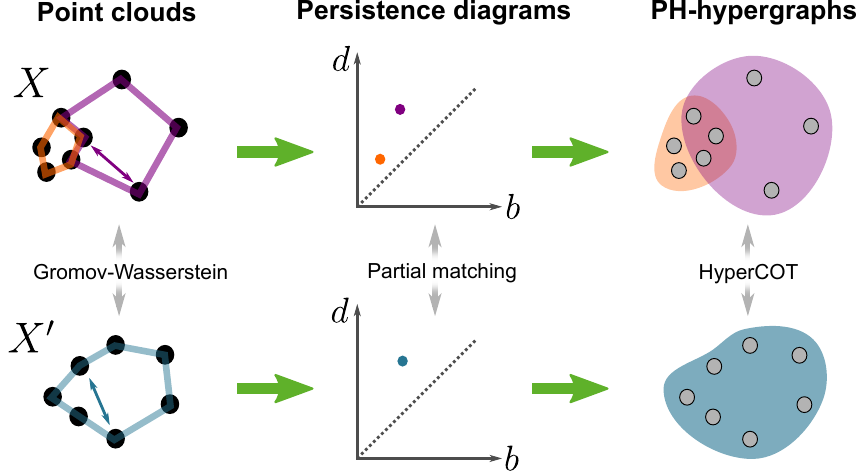}
    \caption{\textbf{Topological Optimal Transport (TpOT).} From left to right: (i) two input point clouds, (ii) their persistence diagrams, and (iii) corresponding PH-hypergraphs. The objective of the TpOT problem shares information across these three levels via a combination of distortion functionals: a Gromov-Wasserstein distortion on the point clouds (geometric information), (partial) Wasserstein matching of points in the persistence diagrams (topological information) and HyperCOT on the PH-hypergraphs (coupling of geometric and topological information). }\label{fig:tpot_pipeline}
\end{figure}

In this paper, we combine the TDA and OT approaches described above to develop a transport-based method for topological data processing. Our approach is inspired by recent extensions built upon persistent homology computations: the hyperTDA framework~\cite{barbensi2022hypergraphs} encodes each persistence feature -- represented as a point in a persistence diagram -- together with a \emph{geometric} realisation of its corresponding homology class as a generating cycle in the input point cloud. The resulting object is a higher-order network called a \emph{PH-hypergraph}. In a PH-hypergraph, vertices correspond to data points used as input to the PH computation, and hyperedges are given by the generators of features in the persistence diagram. We define the \emph{topological optimal transport (TpOT) problem}, by considering a trade-off between preservation of geometric relationships (encoded in pairwise proximity of points in each point cloud), preservation of topological features (encoded in the PH-hypergraph), and a coupling of the two via the PH-hypergraph structure (see Figure~\ref{fig:tpot_pipeline}). The output is \textbf{(1)} a matching between points that is geometrically driven and topologically informed, together with \textbf{(2)} a matching between persistent homology classes that is topologically driven and geometrically informed. 

Finding meaningful ways to match topological features across distinct systems is an important challenge in TDA. In recent years there has been considerable effort towards addressing this question, with a number of different solutions proposed~\cite{cohen2009persistent, bauer2013induced, gonzalez2020basis, reani2022cycle,yoon2023persistent, garcia2022fast}, many of which bridge the topological data analysis and optimal transport literatures~\cite{giusti2023signatures, hiraoka2023topological, li2023flexible}. A key aspect of the TpOT method is leveraging \emph{existing} approaches for optimal matchings of persistence diagrams to drive a matching between topological features, while also taking into account geometric information and the spatial connectivity of homology generators. By endowing PH-hypergraphs with probability measures, we introduce \textit{measure topological networks} as a structure for representing both geometric and PH information, and we formalise a framework for studying the TpOT induced distance $d_{\TpOT, p}$. 

Our contributions are as follows:
\begin{itemize}
    \item We develop a flexible measure-theoretic formalism for simultaneously encoding the geometry and topology of a point cloud, in the form of \emph{measure topological networks}, as well as a family of distances $d_{\TpOT,p}, p \geq 1$, between these objects; see Definitions~\ref{def:measure_topological_network} and \ref{def:TpOT}, respectively.
    \item The distance $d_{\TpOT, p}$ is shown to be a pseudometric on the space of measure topological networks $\mathcal{P}$, and the zero distance equivalence relation is completely characterised; see Theorem \ref{thm:metric}. 
    \item We show that the metric induced by $d_{\TpOT, p}$ is geodesic, characterise the exact form of the geodesics, and use this characterisation to show that this metric space is non-negatively curved in the sense of Alexandrov; see Theorems \ref{thm:geosame}, \ref{thm:geounique} and \ref{thm:nonnegative_curvature}, respectively.
    \item Our framework is formulated in terms of a non-convex optimisation problem, and we provide numerical algorithms for its approximate solution. We demonstrate its application in practice on a variety of examples; see Section \ref{sec: results}.
\end{itemize}

The paper is organised as follows. Section~\ref{sec:maths} describes the necessary mathematical background on persistent homology and optimal transport. In Section~\ref{sec:tpot}, after motivating and constructing the TpOT problem, we define \emph{measure topological networks} and the family of (pseudo-)metrics $d_{\TpOT, p}$. We then discuss theoretical properties (Subsection~\ref{sec:metric}) of the distance $d_{\TpOT, p}$, and provide a characterisation of geodesics in Section~\ref{sec:characterization_of_geodesics}. Finally, we provide details of the computational implementation (Subsection~\ref{sec:solving}), as well as examples (Section~\ref{sec: results}). 

\section{Mathematical background}~\label{sec:maths}

Our formalisation of the TpOT problem relies on constructions in topological data analysis and a measure theoretic formalism. The aim of this section is to introduce the mathematical background needed to define measure topological networks and the family of distances $d_{\TpOT, p}$ described in the Introduction.

\subsection{Persistent homology and persistent homology-hypergraphs}
Let $(X,d)$ be a finite metric space -- for example, a point cloud $X = \{ x_1, \cdots, x_N \} \subset \mathbb{R}^n$ as illustrated in Figure~\ref{fig:ph}(a). For any $\varepsilon > 0$, the \textit{Vietoris-Rips complex} $K_\varepsilon(X)$ (see \textit{e.g.}~\cite[Ch.III.2]{edelsbrunner2010computational}) is the simplicial complex obtained from $X$ by adding a $k$-simplex $[x_{i_0}, x_{i_1}, \cdots ,x_{i_{k}}]$ whenever the distance between all pairs of points in $\{x_{i_0}, x_{i_1}, \cdots ,x_{i_{k}}\}$ is less than $\varepsilon$. Note that $K_{\varepsilon}(X)$ is a sub-complex of $K_{\varepsilon'}(X)$ whenever $\varepsilon \leq \varepsilon'$. Thus, as $\varepsilon$ grows, this yields a nested sequence of simplicial complexes:
\[
  \mathcal{K}_{\rm VR}(X) := K_{\varepsilon_0}(X) \hookrightarrow K_{\varepsilon_1}(X) \hookrightarrow \cdots \hookrightarrow K_{\varepsilon_M}(X).
\]
Here, the numbers $\varepsilon_i$, $i=1,\ldots,M$, are the parameters corresponding to the creation of new simplices; see Figure~\ref{fig:ph}(b). 
\begin{figure}[ht]
  \centering
  \includegraphics[width=0.7\textwidth]{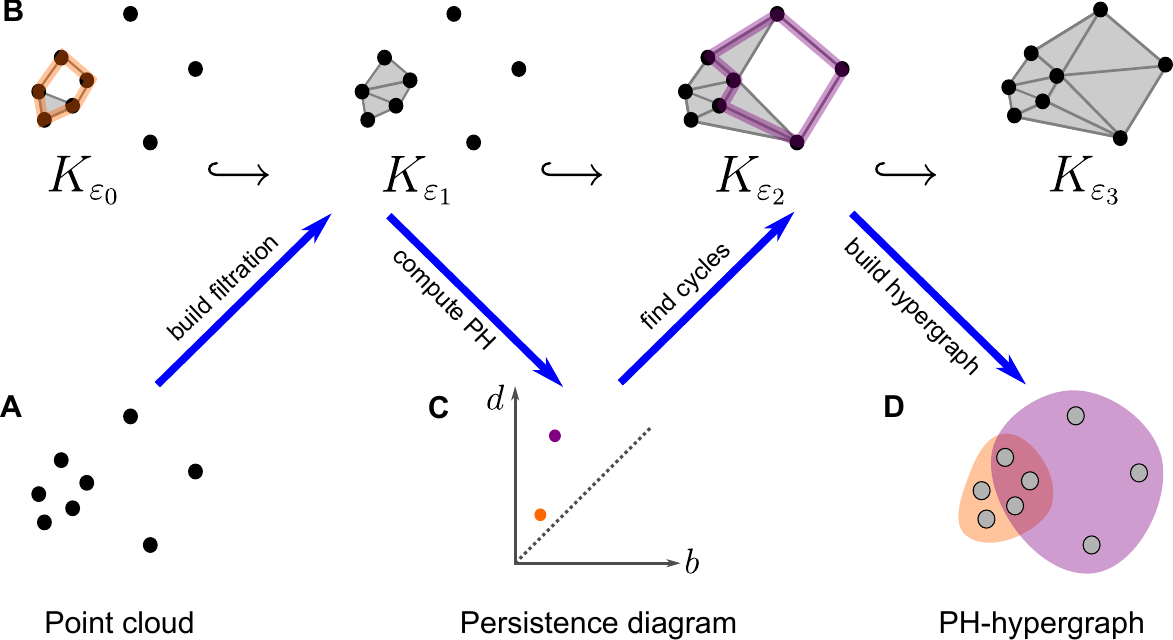}
  \caption{\textbf{Computing PH and the PH-hypergraph} \textbf{(a)} A point cloud and \textbf{(b)} a filtration of simplicial complexes built on it. \textbf{(c)} The $1$-dimensional persistence diagram of the filtration in (b). \textbf{(d)} The PH-hypergraph constructed after computing a representative cycle (orange and purple cycles in (b)) for each class in (c).}\label{fig:ph}
\end{figure}

More generally, for a finite metric space $(X, d)$, a \emph{filtration} over $X$ is a finite sequence $\mathcal{K} = (K_{\varepsilon_1}, \ldots, K_{\varepsilon_M})$ of simplicial complexes, indexed by an increasing sequence of real numbers $\varepsilon_i \leq \varepsilon_{i+1}$, together with simplicial maps $K_{\varepsilon_i} \to K_{\varepsilon_{i+1}}$ for all $i$, such that the vertex set of each $K_{\varepsilon_i}$ is a subset of $X$. Common alternatives to the Vietoris-Rips complex in the literature with a similar interpretation include the \v{C}ech complex~\cite[Ch.III.2]{edelsbrunner2010computational} and the Alpha complex~\cite[Ch.III.4]{edelsbrunner2010computational}. Different constructions of the filtration $\mathcal{K}$ will influence the results of the persistent homology computation and their interpretation  -- our framework is general and does not depend on a specific filtration construction, as long as generator representatives can be retrieved computationally.

A filtration $\mathcal{K}$ over $X$ can be analysed by computing the simplicial homology groups of each simplicial complex in the sequence and the corresponding induced maps. Collectively, this information is called the \emph{persistent homology} (PH) of $\mathcal{K}$. The Structural Theorem~\cite{zomorodian2005computing} guarantees that, when using coefficients in $\mathbb{Z}/2\mathbb{Z}$, the birth, death, and evolution of homology classes in a filtration of simplicial complexes can be summarised in a multi-set called a \emph{persistence diagram}. Let $\Lambda$ denote the upper-diagonal cone in the positive quadrant, $\Lambda = \{ (x, y) \in \mathbb{R}^2, y > x > 0 \}$. The persistence diagram $D$ associated to the filtration $\mathcal{K}$ is then a collection of points in $\Lambda$:
\[
  D = \left\{ (b_i,d_i) \right\}_{i = 1}^{|D|} \: \subset \: \Lambda. 
\]

Each point $(b_i, d_i) \in D$ represents a homology class in the filtration $\mathcal{K}$, with coordinates corresponding to the birth $b_i$ and death $d_i$ parameters of the class (see Figure~\ref{fig:ph}(c)). The \textit{persistence} $d_i-b_i > 0$ of a homology class is a measure of significance of the underlying topological feature. The persistence diagram $D$ can be interpreted as a topological descriptor of $X$. A local, geometric realisation of each homology class can be obtained by computing a representative cycle generating the class: see for instance the orange and purple cycles in Figure~\ref{fig:ph}(b). Computing cycles depends on a number of choices~\cite{ChenGenerators, Dey2019,  Obayashi2018VolumeOC, Ripser_involuted, LiMinimalCycle}, and therefore different representatives may lead to differing downstream results.

The recently introduced PH-hypergraph~\cite{barbensi2022hypergraphs} encodes the algebraic information of a persistence diagram, together with the local geometric interpretation of generating cycles, in a single higher-order network. Recall that a \emph{hypergraph} $(V, E)$ consists of a vertex set $V$ and a set of hyperedges $E$, such that each hyperedge $e \in E$ is a subset of $V$, i.e. $e \subseteq V$. Given a point cloud $X$, we may construct an associated filtration $\mathcal{K}$ together with a choice $g = \{c_1, \ldots, c_{|D|}\}$ of generating cycles for classes in the corresponding persistence diagram $D$. These data can be summarised in a PH-hypergraph $H = H(X, \mathcal{K}, g)$ (see Figure~\ref{fig:ph}(d)) in which we take $X$ to be the vertex set, and one hyperedge added for each cycle in $g$. Specifically, for every cycle $c \in g$, a point $x_i \in X$ is contained in the corresponding hyperedge $e(c)$ if it is a vertex of any simplex in $c$. While the hypergraph structure $H$ clearly depends on the choice of generators $g$ and the choice of filtration $\mathcal{K}$, for a fixed filtration $\mathcal{K}$ the qualitative (hyper)network structure of the PH-hypergraph has been empirically shown to be stable to noise and different choices of generating cycles~\cite{barbensi2022hypergraphs}. 

\subsection{Optimal transport on persistence diagrams}\label{sec:optimal_transport_persistence_diagrams}

Given a Polish space $X$, let $\mathcal{P}(X)$ denote the set of probability measures supported on $X$. A \emph{coupling} between two distributions $\mu, \mu' \in \mathcal{P}(X)$ is a measure $\pi \in \mathcal{P}(X \times X)$ with \emph{left} and \emph{right} marginals equal to $\mu$ and $\mu'$, respectively. That is,
\[
    \pi(A \times X) = \mu(A) \quad \mbox{and} \quad \pi(X \times A) = \mu'(A)
\]
for all measurable $A \subset X$. We denote the set of all couplings as $\Pi(\mu, \mu')$.

Now let us equip $X$ with a metric $d$. For $p \in [1,\infty)$, the \emph{$p$-Wasserstein distance} \cite[Definition 6.1]{villani2009optimal} between $\mu$ and $\mu'$ is then defined as
\begin{equation}\label{eq:wass}
    d_{\mathrm{W},p}(\mu, \mu'): = \inf_{\pi \in \Pi(\mu,\mu')} \left( \int_{X \times X} d(x,y)^p \: \diff \pi(x, y) \right)^{1/p} .
\end{equation}
This metric is the central object of study in the optimal transportation literature \cite{villani2009optimal, santambrogio2015optimal, peyre2019computational}. It is a standard result that the infimum in \eqref{eq:wass} is realised (i.e. it is in fact a \emph{minimum}) and a minimiser $\pi$ of \eqref{eq:wass} is referred to as an \emph{optimal coupling}. Intuitively, this is a probabilistic matching between points in the supports of $\mu$ and $\mu'$. 

The optimal transport approach of finding matchings can be related to notions of distance on the space of persistence diagrams: the most popular construction is known as the \emph{bottleneck distance}~\cite{cohen2005stability}, which is related to the $\infty$-Wasserstein distance (although, we note that the initial introduction of the bottleneck distance made no reference to optimal transport). Our interest in this article is directed to similar versions of the $p$-Wasserstein distances for $1 \leq p < \infty$. The usual modification of the Wasserstein matching problem to compare persistence diagrams is to allow points to be matched to a \emph{virtual} point $\partial_\Lambda$, representing the diagonal set $\{(a,a) \mid a \in \R\} \subset \R^2$ (i.e. homology classes with vanishing persistence). Formally, for $p \in [1,\infty)$, the \emph{$p$-Wasserstein distance} between two persistence diagrams $D$ and $D'$ (see~\cite[Equation 6]{lacombe2018large}) is defined as
\begin{equation}
    d_{\mathrm{W},p}^{\PD}(D, D') = \min_{\pi \in \Pi (D, D')} \left( \sum_{ (a,b) \in \pi } \| a-b \|^p_p + \sum_{s \in U_\pi} \| s-\Proj_{\partial_\Lambda}(s) \|^p_p \right)^{1/p},\label{eq:partial_matching_persistence_diagrams}
\end{equation}
where the various notations are described as follows:
\begin{itemize}
  \item Slightly abusing notation, to illustrate the analogy to the definition of Wasserstein distance in \eqref{eq:wass}, we denote by $\Pi(D, D')$ the set of all \emph{partial matchings} between points representing homology classes in $D$ and $D'$. For any $\pi \in \Pi(D, D')$, $\pi \subset D \times D'$ has the property that each $a \in D$ and $b \in D'$ appear in at most one ordered pair in $\pi$.
  \item The set $U_\pi$ is the set of \emph{unmatched points} for $\pi$: $s \in U_\pi$ if and only if $s \in D$ and $(s,b) \not \in \pi$ for any $b \in D'$ or $s \in D'$ and $(a,s) \not \in \pi$ for any $a \in D$. 
  \item The map $\Proj_{\partial_\Lambda}$ is the metric projection (w.r.t. any $\|\cdot \|_p$) of $\Lambda$ onto the diagonal, as represented by the point $\partial_\Lambda$. Explicitly, for $s = (x,y)$, 
  \[ \Proj_{\partial_\Lambda}(s) = ((x+y)/2, (x+y)/2). \]
\end{itemize}

The output from solving \eqref{eq:partial_matching_persistence_diagrams} is a partial matching $\pi$ between homology classes in $D$ and $D'$ which is optimal with respect to the cost induced by the $\|\cdot\|_p$-norm in $\mathbb{R}^2$, restricted to $\Lambda$. By construction, this matching only depends on the \emph{coordinates} of points in the persistence diagram. The resulting matching is thus agnostic to aspects of the data that are not captured in the persistence diagram representations. While the generality of the persistence diagram representation may contribute to its wide applicability, in many contexts this may be a limitation. 

Observe that the notation and concepts used to define $d_{\mathrm{W},p}^{\PD}$ are similar to those that appear in the optimal transport framework described above; indeed, the family of persistence diagram distances \eqref{eq:partial_matching_persistence_diagrams} can be generalised via the language of \emph{partial} OT~\cite{figalli2010new, divol2021understanding} by associating measures to the objects under study. We define a \emph{measure persistence diagram} (MPD) to be a Radon measure $\mu$ supported on $\Lambda$. Then, any finite persistence diagram $D = \{(b_i,d_i) \}_{i = 1}^{|D|}$ determines a measure persistence diagram:
\begin{equation}\label{eqn:diagram_to_measure}
    \nu_D = \sum_{i = 1}^{|D|} \delta_{x_i},
\end{equation} 
where the $\delta_{x_i}$ denotes a Dirac supported at $x_i = (b_i , d_i)$. 

Let $\overline{\Lambda}:= \Lambda \cup \{\partial_\Lambda\}$ be obtained by adjoining to $\Lambda$ the virtual point $\partial_\Lambda$, with the resulting space endowed with the disjoint union topology. Given two measure persistence diagrams $\nu$ and $\nu'$, we say a Radon measure $\pi$ on $\overline{\Lambda} \times \overline{\Lambda}$ is \emph{admissible} if it has marginals $\nu$ and $\nu'$ (we consider $\nu$ and $\nu'$ to be measures on $\overline{\Lambda}$ which are supported on $\Lambda$) and it satisfies the additional constraint $\pi((\partial_\Lambda,\partial_\Lambda)) = 0$, and write $\pi \in \Pi_\adm(\nu, \nu')$. Then for $p \in [1,\infty)$, we define the \emph{$p$-Wasserstein distance between measure persistence diagrams}~\cite{divol2021understanding} as:

\begin{equation}\label{eqn:p_Wasserstein_MPD}
    d_{\mathrm{W},p}^{\MPD}(\nu, \nu') = \inf_{\pi \in \Pi_{\rm adm}(\nu, \nu')} \left( \int_{\overline{\Lambda} \times \overline{\Lambda}}  \| x-x' \|^p_p \: \diff \pi(x, x')  \right)^{1/p}.
\end{equation}
If $\nu_D$ and $\nu_{D'}$ arise from finite persistence diagrams $D$ and $D'$, respectively, as in \eqref{eqn:diagram_to_measure}, then it is not hard to show that $d_{\mathrm{W},p}^{\PD}(D,D') = d_{\mathrm{W},p}^{\MPD}(\nu_D,\nu_{D'})$~\cite[Proposition 3.2]{divol2021understanding}. In general, $d_{\mathrm{W},p}^{\MPD}$ is finite, provided that we work in the space of measure persistence diagrams $\mu$ with \emph{finite $p$-persistence}, in the sense that

\begin{equation}\label{eqn:finite_p_persistence}
  \left( \int_\Lambda \|x - \mathrm{Proj}_{\partial_\Lambda}(x)\|^p_p \: \diff\mu(x) \right)^{1/p} < \infty. 
\end{equation}

In what follows, we will restrict to the space of measure persistence diagrams $\mu$ with finite $p$-persistence for every $p \geq 1$. This is mostly for convenience, to avoid the need to include additional technical conditions and notation in the statements of our results. With an abuse of notation, we will still indicate the space of such objects as $\MPD$.

\subsection{Gromov-Wasserstein and co-optimal transport distances}\label{sec:GW_distances}

A \emph{metric measure space (mm-space)}~\cite{sturm2023space, memoli2011gromov} is a triple $M = (X, d, \mu)$, where $X$ is a space endowed with a complete separable metric $d$ and with a fully supported Borel measure $\mu$. Given two mm-spaces $M = (X, d, \mu)$ and $M' = (X', d', \mu')$ and a coupling $\pi \in \Pi(\mu, \mu')$ and $p \in [1,\infty)$, we introduce the $p$-distortion functional $\mathrm{dis}_p^\mathrm{GW}$ \cite{sturm2023space}:
\[
    \disp  = \left( \int_{ (X\times X')^2} | d(x,y) - d'(x',y') |^p \: \diff \pi(x, x') \: \diff \pi(y, y') \right)^{1/p} = \| d -d' \|_{L^p(\pi \otimes \pi)}.
\]
By taking the infimum over all feasible couplings, we obtain the \emph{Gromov-Wasserstein (GW) $p$-distance} between $M$ and $M'$:
\[
    \gw(M,M') := \inf_{\pi \in \Pi(\mu,\mu')} \disp.
\]
This quantity is finite if we work in the space of mm-spaces whose distance functions have finite $p$-th moment; let us make the simplifying convention that we always work with bounded mm-spaces, in order to avoid this technical issue. The GW-distance induces a pseudo-metric on the space of mm-spaces~\cite{memoli2007, memoli2011gromov}, such that $\gw(M,M') = 0$ if and only if there is a measure-preserving isometry from $M$ to $M'$.

For our purposes, we relax the requirement on the function $d : X \times X \to \R$ to be a metric and instead allow any symmetric, measurable and bounded function $k \in L^p_{\mathrm{sym}}(X \times X)$. Then the structure $M = (X, k, \mu)$ is instead referred to as a \emph{gauged measure space}~\cite{sturm2023space} or a \emph{measure network}~\cite{chowdhury2020gromov} (in fact, the definition of a measure network in \cite{chowdhury2020gromov} even drops the symmetry condition). This allows us to consider pairwise relations on $X$ to be modelled by a wider class of functions, such as affinity functions or kernels \cite{chowdhury2021generalized, xu2019gromov}. In this case, the space $(\mathcal{M}, d_{\GW,p})$ of equivalence classes of measure networks (under the equivalence relation $M \sim M' \Leftrightarrow d_{\GW,p}(M,M')=0$) is a complete, geodesic, metric space~\cite{sturm2023space, chowdhury2020gromov}. 

Recently, a GW-like distance has been developed for comparing hypergraph structures~\cite{chowdhury2023hypergraph}, based on ideas from the \emph{co-optimal transport} problem initially introduced in~\cite{titouan2019optimal}. Recall that a hypergraph is defined by a pair $H = (V,E)$, where $V$ is a finite set of vertices and $E$ is a set of hyperedges; each $e \in E$ is a subset of $V$. A general structure which encompasses the notion of a hypergraph is a \emph{measure hypernetwork}, which is a quintuple $H = (X, \mu, Y, \nu, \omega)$, where $(X, \mu)$ and $(Y, \nu)$ are respectively Polish spaces with fully supported Borel probability measures, and $\omega$ is a non-negative, measurable and bounded function $\omega: X \times Y \to \mathbb{R}$. Indeed, any hypergraph $(V,E)$ can be encoded as a measure network $(V,\mu,E,\nu,\omega)$, by choosing $\mu$ and $\nu$ to be uniform probability measures on $V$ and $E$ respectively, and $\omega:V \times E \to \R$ to be an indicator $\omega(v,e) = \ones_{v \in e}$. 

Given two measure hypernetworks $H = (X, \mu, Y, \nu, \omega)$ and $H' = (X', \mu', Y', \nu', \omega')$, the \emph{$p$-co-optimal distortion} for a pair of couplings $(\pi^v, \pi^e) \in \Pi(\mu,\mu') \times \Pi(\nu, \nu')$ (the notation here intended to evoke the idea that $\pi^v$ is a coupling of vertices and $\pi^e$ is a coupling of hyperedges) is defined to be  
\begin{align*}
  \discp(\pi^v, \pi^e) &:= \left( \int_{X\times X' \times Y \times Y'} | \omega(x,y) -\omega'(x',y') |^p \: \diff\pi^v(x, x') \: \diff\pi^e(y, y') \right)^{1/p} \\
  &\:= \| \omega - \omega' \|_{L^p(\pi^v \otimes \pi^e)}.
\end{align*}
Similarly to the GW distance, the \emph{hypernetwork $p$-co-optimal transport distance} is then given by 
\begin{align}
    \coot(H,H'):= & \inf_{\pi^v \in \Pi(\mu,\mu'), \: \pi^e \in \Pi(\nu,\nu')} \discp(\pi^v, \pi^e). \label{eq:cooptimal_transport_definition}
    \end{align}
The expression $\coot(H,H')$ defines a pseudo-metric on the space of measure hypernetworks, and induces a metric on the space of equivalence classes of measure hypernetworks up to isomorphism, which is also shown to be geodesic and complete in~\cite[Theorem 1]{chowdhury2023hypergraph}. 

\section{Topological Optimal Transport}~\label{sec:tpot}

Having discussed matching-based comparison approaches for persistence diagrams, point clouds, and hypergraphs respectively, we turn to the aim of this article, which is to develop a framework for matching point clouds that \emph{couples} geometric and topological information via these structures. Our starting point is to represent a point cloud $X$ together with its topological features as a PH-hypergraph $H = H(X, \mathcal{K}, g)$. Given another point cloud and PH-hypergraph, our objective is to find a \emph{coupling} between them that minimises topological distortion and optimally preserves topological features, in some sense. A na\"ive attempt to a find a topology-informed matching between point clouds $X$ and $X'$ is to simply apply the HyperCOT framework~\cite{chowdhury2023hypergraph} on their PH-hypergraphs $H = H(X, \mathcal{K}, g)$ and $H' = H'(X', \mathcal{K}', g')$. However, this simplistic approach suffers from at least two problems which we list below. 
\begin{itemize}
    \item \textbf{Problem 1.} PH-hypergraphs do not contain any information on the \emph{significance} (measured by persistence) of homology classes. One could naturally weight hyperedges by the persistence value of their corresponding homology classes~\cite{barbensi2022hypergraphs}. However, in an optimal transport setting, the HyperCOT would promote ``splitting'' of mass from hyperedges with higher persistence to hyperedges with lower persistence, rather than the desired effect of matching significant hyperedges. 
    \item \textbf{Problem 2.} PH-hypergraphs can be \emph{disconnected}, and there might be points which do not belong to any hyperedge~\cite{barbensi2022hypergraphs}. In this context, the desired property is to have geometric (spatial) information inform the transport plan. 
\end{itemize}
 The goal of this section is to develop a framework for geometric cycle matching which addresses these problems.

\subsection{Measure topological networks} 

To solve the problems described above, we first introduce an appropriate general model for the structures that we wish to compare. We first introduce necessary notation as follows: for a locally compact Polish metric space $Y$, we denote by $\overline{Y}$ the \emph{augmented} space obtained as the disjoint union $\overline{Y} = Y \cup \{\partial_Y\}$ (with the disjoint union topology), where $\partial_Y$ is an abstract point.

\begin{definition}[Measure Topological Network]\label{def:measure_topological_network}
    A \emph{measure topological network}, or simply \emph{topological network}, is a triple $P = \left((X, k, \mu), (Y, \iota, \nu), \omega\right)$, 
    where 
    \begin{itemize}
        \item $(X, k, \mu)$ is a gauged measure space (see Section \ref{sec:GW_distances}),
        \item $Y$ is a locally compact Polish 
    space, 
        \item $\nu$ is a Radon measure supported on $Y$,  
        \item $\iota:Y \longrightarrow \Lambda$ is a continuous function,
        \item $\iota_{\#}\nu$ is a measure persistence diagram (see Section \ref{sec:optimal_transport_persistence_diagrams}), and
        \item $\omega$ is a measurable and bounded function $\omega: X \times Y \to \mathbb{R}$, so that the quintuple $\left( X, \mu, Y, \nu, \omega  \right)$ is a measure hypernetwork (see Section \ref{sec:GW_distances}). 
    \end{itemize} 
    Let us denote by $\mathcal{P}$ the class of all measure topological networks, with the simplifying conventions that gauged measure spaces have bounded kernels and that measure persistence diagrams have finite $p$-persistence for all $p \geq 1$ (see \eqref{eqn:finite_p_persistence}). 
\end{definition}

\begin{example}[Topological Network from a Metric Measure Space]\label{ex:mmspace_to_top_network}
  A measure topological network arises from the data of a PH-hypergraph defined over a metric measure space, and this is inspiration for the definition and the source of all computational examples. Indeed, given a finite metric measure space $(X,d,\mu)$, an associated persistence diagram $D$ (obtained via, say, Vietoris-Rips persistent homology) and a choice of a set of generating cycles $g$ for $D$, we construct an associated topological network as follows. First, we take $Y = D$ as a multiset of points in $\Lambda$ (one can consider this as a proper set by indexing its points, so that $Y$ can be considered a finite topological space). Taking $\nu$ to be the uniform (i.e., counting) measure and $\iota:Y \to \Lambda$ to be inclusion, we have that $\iota_{\#}\nu = \nu_D$ as in \eqref{eqn:diagram_to_measure}.  Finally, $\omega$ is a kernel representing the hypergraph structure $(X,g)$.

  The simplest choice is to use a binary incidence function: for $x \in X$ and a point $(a,b) \in D$,  represented by a cycle $c \in g$, $\omega(x,c) = 1$ if and only if $x$ is a vertex of any simplex in $c$.
  The flexibility in the definition allows for other hypergraph kernels which may have more desirable properties in practice, such as those studied in~\cite{chowdhury2023hypergraph}.
  Of note, the recently developed method TOPF \cite{grande2024node} produces node-level representations of topological features in the form of a scalar function on points and topological features, and as such fits within our framework. 
  Finally, we can replace the metric $d$ with a symmetric gauge function $k$ that may better encode local geometry. We refer the reader to Section \ref{sec: results} for examples of different choices of incidence and gauge functions. 
\end{example} 

\begin{remark}\label{rmk:notopo}
    In full generality, measure topological networks are not restricted to describing topological features of the underlying space. The construction described in Example~\ref{ex:mmspace_to_top_network} motivates the use of ``topological'' in the name.
\end{remark}

Although computational examples of topological networks in this paper will always be constructed as in Example \ref{ex:mmspace_to_top_network}, we provide the following additional construction to motivate the level of generality of our definition.

\begin{example}[Topological Network from a Curvature Set]\label{ex:curvature_set}
Let $(X,d,\mu)$ be a finite metric space. For fixed $k < |X|$, consider the map $d_k:X^k \to \R^{k \times k}$ defined by $d_k(x_1,\ldots,x_k) = (d(x_i,x_j))_{i,j=1}^k$; that is, this map takes a $k$-tuple of points to its distance matrix. The image of this map is an invariant of $X$ introduced by Gromov in~\cite{gromov1999metric}, called the $k$th \emph{curvature set} of $X$. Given a $k \times k$ distance matrix, one can apply degree-$\ell$ Vietoris-Rips persistent homology, so that the composition yields a map $D_{k,\ell}$ from $X^k$ into the space of persistence diagrams---this invariant and other related invariants were thoroughly studied in the recent paper~\cite{gomez2024curvature}. This structure gives rise to a topological network as follows. Let $\widetilde{Y}$ be the multiset consisting of all persistence points arising in diagrams in the image of $D_{k,\ell}$, let $Y$ be its underlying set, let $\nu$ be the pushforward of uniform measure on $\widetilde{Y}$ to $Y$ (so that $\nu$ counts persistence points with multiplicity), and let $\iota:Y \to \Lambda$ be the inclusion map. We then define $\omega:X \times Y \to \R$ as 
\[
\omega(x,y) = \mathbb{P}_{\mu^{\otimes (k-1)}} \big( y \in D_{k,\ell}(x,x_1,\ldots,x_{k-1})\big).
\]
\end{example}

\begin{remark}[Finite and Infinite Diagrams]
    We acknowledge that our main examples of topological networks from Example \ref{ex:mmspace_to_top_network} (as well as the examples described in Example \ref{ex:curvature_set}) have the property that $Y$ is finite; that is, the relevant MPDs are finitely supported. One could safely work under this assumption throughout the analysis in the rest of the paper. However, this assumption doesn't make any of the analysis significantly easier, so we opt to work at the level of generality which allows for sets $Y$ with infinite cardinality. Moreover, there are interesting examples where non-finitely supported MPDs arise, and we prefer to work with a formalism which is able to handle these examples, with a view toward future applications. One source of such examples is the statistical theory of the topology of random point clouds, where non-finite MPDs play an important role as asymptotic distributions when the number of points tends to infinity \cite{chazal2018density,divol2019choice,goel2018asymptotic}. \\
\end{remark}

We now aim to define a notion of distance between measure topological networks, which will be the main object of study throughout the rest of the paper. First, we synthetically extend the definition of an admissible coupling from the setting of persistence diagrams (see Section \ref{sec:optimal_transport_persistence_diagrams}) to the general setting of measure spaces with adjoined virtual points.

\begin{definition}[Admissible Couplings]
    Let $(Y,\nu)$ and $(Y',\nu')$ be a locally compact Polish spaces endowed with a Radon measures and let $\overline{Y} = Y \cup \{\partial_Y\}$ and $\overline{Y'} = Y' \cup \{\partial_{Y'}\}$ be the associated augmented spaces. We say that a Radon measure $\pi$ on $\overline{Y} \times \overline{Y'}$ is \emph{admissible} if it has marginals $\nu$ and $\nu'$ and satisfies $\pi((\partial_Y,\partial_{Y'})) = 0$. We denote the set of admissible couplings by $\Pi_{\mathrm{adm}}(\nu,\nu')$.
\end{definition}

This leads to the following main definition.

\begin{definition}[Topological Optimal Transport (TpOT)] \label{def:TpOT}
    Given two topological networks $P = \left((X, k, \mu), (Y, \iota, \nu), \omega\right)$ and $P' =  \left( (X', k', \mu'), (Y', \iota', \nu'),  \omega' \right)$, we formulate the \emph{Topological Optimal Transport (TpOT) problem} as: 
    \begin{align}
          d_{\TpOT, p}(P, P') &:= \inf_{\substack{\pi^v \in \Pi(\mu,\mu') \\ \pi^e \in \Pi_{\rm adm}(\nu,\nu')}} \left(  \int_{\overline{Y} \times \overline{Y'}} \| \iota(y) - \iota'(y') \|_p^p \: \diff \pi^e(y, y') \label{eqn:TpOT1}  \right. \\
                              & \quad\quad\quad  +  \int_{(X\times X')^2} | k(x,y) -k'(x',y') |^p \: \diff\pi^v(x, x') \: \diff\pi^v(y, y')  \label{eqn:TpOT2} \\ 
                              & \quad\quad\quad  \left. + \int_{X\times X'\times \overline{Y} \times \overline{Y'}} | \omega(x,y) - \omega'(x',y') |^p \: \diff\pi^v(x, x') \: \diff\pi^e(y, y')\right)^{\frac{1}{p}}.\label{eqn:TpOT3}
\end{align} 
Letting $d_p$ denote the $\ell^p$-distance on $\R^2$, this can be expressed more concisely as
\begin{equation}\label{eq:tpotCON}
\begin{split}
  d_{\TpOT, p}(P, P') &= \inf_{\pi^v, \pi^e} \left(  \| d_p \circ (\iota, \iota') \ \|^p_{L^p(\pi^e)}\right. \\
  &  \qquad \qquad \qquad  + \left.\| k - k' \|^p_{L^p(\pi^v \otimes \pi^v)}
  +\| \omega - \omega' \|^p_{L^p(\pi^v \otimes \pi^e)} \right)^{\frac{1}{p}}.
\end{split}
\end{equation}
\end{definition}

Here we are implicitly extending $\omega$ and $\iota$ to functions $\omega:  X \times \overline{Y} \to \mathbb{R}$, $\iota:  \overline{Y} \to \overline{\Lambda}$ by setting $\omega\lvert_{X \times \partial_Y} = 0$ and $\iota(\partial_Y) = \partial_\Lambda$. The restriction $\omega\lvert_{X \times \partial_Y}$ can be interpreted as (trivial) a membership function (via the map $\iota$) for the diagonal $\partial_\Lambda$. 

Let us interpret the various components of this definition, when the measure topological spaces arise from PH-hypergraphs, as described in Example \ref{ex:mmspace_to_top_network}.
\begin{itemize}
    \item The coupling $\pi^v$ defines a probabilistic matching between the points of the gauged measure spaces (see Figure~\ref{fig:tpot_pipeline}, left column) and the admissible coupling $\pi^e$ defines a probabilistic matching between generating cycles of the persistence diagrams (see Figure~\ref{fig:tpot_pipeline}, middle column).
    \item The integral in \eqref{eqn:TpOT1} is a partial transport term which measures the quality of the matching of generating cycles. This term is intended to address Problem 1.
    \item The integral in \eqref{eqn:TpOT2} is a ``Gromov-Wasserstein'' term which matches the quality of the matching of points in the gauged measure spaces. This term is intended to address Problem 2.
    \item The integral in \eqref{eqn:TpOT3} is a ``co-optimal transport'' term which addresses the basic problem of matching PH-hypergraphs.
\end{itemize}

\begin{remark}\label{rem:weights}
    In applications, it is frequently useful to include tunable weights on the terms of \eqref{eq:tpotCON}, which can be used as hyperparameters to accentuate topology or geometry as is appropriate for a given task. To keep the exposition clean, we suppress these weights from theoretical considerations. They are explicitly included in the exposition of the computational pipeline---see Section \ref{sec:discrete}.
\end{remark}

\subsection{Metric properties of TpOT}\label{sec:metric}

Next we will show that $d_{\rm TpOT, p}$ defines a metric on the space of (certain equivalence classes of) topological networks. As a first step, we have the following proposition. 

\begin{proposition}\label{prop:realised}
The infimum of Equation~\eqref{eq:tpotCON} is realised.
\end{proposition}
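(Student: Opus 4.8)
The plan is to establish existence of minimisers by a direct-method argument: show the feasible set of coupling pairs $(\pi^v,\pi^e)$ is compact in an appropriate topology, and that the objective functional in \eqref{eq:tpotCON} is lower semicontinuous with respect to that topology. The natural choice is the topology of weak convergence of measures. For the vertex couplings, $\Pi(\mu,\mu')$ is a set of probability measures on the compact (or at least Polish) space $X \times X'$ with fixed marginals; by Prokhorov's theorem this set is tight, hence sequentially weakly compact, and it is weakly closed since the marginal constraints are preserved under weak limits. The subtlety is on the edge side: $\pi^e$ ranges over $\Pi_{\mathrm{adm}}(\nu,\nu')$, a set of Radon measures on $\overline{Y} \times \overline{Y'}$ whose marginals are the (possibly infinite-mass, but finite since we assumed finite $p$-persistence implies... actually $\nu,\nu'$ need not be probability measures) measures $\nu$ and $\nu'$.

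First I would reduce to the case of finite total mass: since $\nu$ and $\nu'$ are Radon measures with finite $p$-persistence for all $p\ge 1$, and in our examples they are finite counting measures, one should argue that any admissible $\pi^e$ has both marginals finite, so $\pi^e$ has finite total mass bounded by $\min(\nu(\overline Y), \nu'(\overline{Y'}))$ plus the contribution of matching to the virtual points — care is needed because the marginal of $\pi^e$ on $\overline Y$ equals $\nu$ (supported on $Y$), so mass sent to $\partial_{Y'}$ is accounted for, and symmetrically; the total mass of $\pi^e$ is thus $\nu(Y) + \nu'(Y')$ minus the matched mass, which is at most $\nu(Y)+\nu'(Y')<\infty$. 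After rescaling, $\Pi_{\mathrm{adm}}(\nu,\nu')$ becomes a set of measures of uniformly bounded mass on the Polish space $\overline Y \times \overline{Y'}$; tightness follows because the marginals $\nu,\nu'$ are tight (Radon measures on Polish spaces) and one must handle the atoms $\partial_Y,\partial_{Y'}$, which are already compact singletons. Hence $\Pi_{\mathrm{adm}}(\nu,\nu')$ is relatively weakly compact. Weak closedness requires checking that the marginal conditions and the constraint $\pi^e(\{(\partial_Y,\partial_{Y'})\})=0$ pass to the limit; the marginal conditions are closed under weak convergence, and the point-mass constraint passes to the limit because $\{(\partial_Y,\partial_{Y'})\}$ is open in $\overline Y \times \overline{Y'}$ by the disjoint-union topology, so $\pi^e_n(\{(\partial_Y,\partial_{Y'})\})=0$ together with the portmanteau theorem (lower semicontinuity of measure on open sets) forces the limit to also vanish there — actually this gives $\liminf$, so the limit mass is $\ge 0$, which is not enough; instead I would use that $\{(\partial_Y,\partial_{Y'})\}$ is also closed (singletons are closed in Hausdorff spaces), so the portmanteau upper bound for closed sets gives $\pi^e(\{(\partial_Y,\partial_{Y'})\}) \le \liminf \pi^e_n(\{(\partial_Y,\partial_{Y'})\}) = 0$.

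Next I would take a minimising sequence $(\pi^v_n,\pi^e_n)$, extract a jointly weakly convergent subsequence $(\pi^v_n,\pi^e_n)\to(\pi^v,\pi^e)$ using the product compactness just established, and pass to the limit in the objective. The three terms of \eqref{eq:tpotCON} must each be shown weakly lower semicontinuous. The term $\|d_p\circ(\iota,\iota')\|^p_{L^p(\pi^e)}$ is the integral of a continuous nonnegative function (since $\iota,\iota'$ are continuous and extended by $\iota(\partial_Y)=\partial_\Lambda$, with $d_p$ the metric on $\overline\Lambda$) against $\pi^e_n$; by the standard lower-semicontinuity of $\mu\mapsto\int f\,d\mu$ for nonnegative lower semicontinuous $f$ under weak convergence, this term is weakly l.s.c. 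The Gromov–Wasserstein term $\|k-k'\|^p_{L^p(\pi^v\otimes\pi^v)}$ involves the product measure $\pi^v_n\otimes\pi^v_n$, and weak convergence of $\pi^v_n$ to $\pi^v$ implies weak convergence of the products (on the compact/Polish space $(X\times X')^2$); the integrand $|k(x,y)-k'(x',y')|^p$ is bounded (kernels are bounded by assumption) and measurable but not necessarily continuous, so I would either invoke that it can be taken lower semicontinuous after modification on a null set, or — more robustly — approximate from below by continuous functions and use monotone convergence, exactly as in the standard proofs of existence for GW couplings (cf. Mémoli, Sturm). The co-optimal term $\|\omega-\omega'\|^p_{L^p(\pi^v\otimes\pi^e)}$ is handled the same way, using joint weak convergence of $\pi^v_n\otimes\pi^e_n \to \pi^v\otimes\pi^e$ and boundedness of $\omega,\omega'$.

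The main obstacle I anticipate is the non-compactness and possibly infinite mass on the edge/persistence-diagram side: one genuinely needs the finite-$p$-persistence hypothesis to control mass escaping toward the diagonal $\partial_\Lambda$ (equivalently, to keep the minimising sequence tight away from the virtual points) and to ensure the first term stays finite along the minimising sequence. Concretely, a minimising sequence has uniformly bounded objective, so in particular $\int \|\iota(y)-\iota'(y')\|_p^p\,d\pi^e_n$ is bounded; combined with the fixed marginals $\nu,\nu'$ of finite $p$-persistence, this yields uniform integrability / tightness of the $\pi^e_n$ on $\overline Y\times\overline{Y'}$, preventing mass from concentrating at infinity. Making this tightness argument precise — handling the interplay between the augmented-space topology, the continuous maps $\iota,\iota'$, and the cost controlling escape to the diagonal — is the technical heart of the proof; the rest is a routine application of the direct method and standard weak-lower-semicontinuity lemmas for optimal-transport-type functionals.
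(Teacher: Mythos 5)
Your overall strategy (direct method: compactness of the feasible set plus lower semicontinuity of the objective) is the same as the paper's, and your treatment of the vertex couplings $\Pi(\mu,\mu')$ matches the paper's. The genuine gap is on the edge side. You reduce to the case where $\pi^e$ has finite total mass by asserting that $\nu(Y)+\nu'(Y')<\infty$, but this is not available at the paper's stated level of generality: the standing hypothesis is only finite $p$-persistence, i.e.\ $\int_\Lambda \|x-\Proj_{\partial_\Lambda}(x)\|_p^p\,\diff(\iota_{\#}\nu)(x)<\infty$, which controls mass weighted by distance to the diagonal and is perfectly compatible with $\nu(Y)=\infty$ (infinitely many persistence points accumulating at the diagonal; the paper explicitly insists on retaining non-finitely supported MPDs). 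Once the total mass can be infinite, Prokhorov-type weak compactness is the wrong tool. The paper instead topologises $\Pi_{\mathrm{adm}}(\nu,\nu')$ with the \emph{vague} topology on Radon measures and quotes sequential compactness of $\Pi_{\mathrm{adm}}$ from Divol--Lacombe (proof of their Proposition 3.1); in the vague topology no uniform mass bound or tightness is required for compactness, and the possibility of mass escaping toward the diagonal is absorbed by lower semicontinuity of the cost rather than prevented. Your proposed repair via uniform integrability along a minimising sequence does not obviously close the gap, because boundedness of $\int\|\iota(y)-\iota'(y')\|_p^p\,\diff\pi^e_n$ gives no control on the mass of $\pi^e_n$ near the diagonal, which is exactly where the mass can be unbounded.

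A second, smaller error: your argument that the constraint $\pi^e(\{(\partial_Y,\partial_{Y'})\})=0$ survives the limit uses the Portmanteau inequality in the wrong direction. For closed $F$ one has $\limsup_n\mu_n(F)\le\mu(F)$, which bounds the limit measure from \emph{below} and yields nothing here; for open $G$ one has $\mu(G)\le\liminf_n\mu_n(G)$, which is exactly what you need, since $\{(\partial_Y,\partial_{Y'})\}$ is open (an isolated point in the disjoint-union topology). That is the version you considered and incorrectly discarded. In any case the constraint is essentially harmless: mass at $(\partial_Y,\partial_{Y'})$ contributes zero cost and does not affect the marginals on $Y$ and $Y'$, so it may simply be deleted from any limit point. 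If you restrict to finitely supported diagrams---which covers all of the paper's computational examples---your finite-mass, weak-convergence argument does go through after these corrections; but to prove the proposition as stated you need the vague-topology compactness of $\Pi_{\mathrm{adm}}(\nu,\nu')$.
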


\begin{proof}[Proof of Proposition \ref{prop:realised}]
It is a classic result that the coupling space $\Pi(\mu, \mu')$ is (sequentially) compact in $P(X \times X')$, where $P(X \times X')$ is the set of Borel probability measures on $X \times X'$, see, \textit{e.g.},~\cite[Lemma 4.4]{villani2009optimal} and~\cite[proof of Lemma 24]{chowdhury2023hypergraph}, where we topologise spaces of measures with the weak topology. Similarly, $\Pi_{\rm adm}(\nu, \nu')$ is sequentially compact in the space $M(\overline{Y} \times \overline{Y'})$ of Radon measures on $\overline{Y} \times \overline{Y'}$ (see~\cite[proof of Prop 3.1]{divol2021understanding}, which extends to Polish spaces as in Definition~\ref{def:measure_topological_network}) with the vague topology, i.e. given by duality between measures and continuous functions with compact support. Together, these imply that $\Pi(\mu, \mu') \times \Pi_{\rm adm}(\nu, \nu')$ is sequentially compact in $P(X \times X') \times M(\overline{Y} \times \overline{Y'})$ with the product topology. 

To complete the proof, we just need to show lower-semicontinuity of the function $C_p: \Pi(\mu, \mu') \times \Pi_{\rm adm}(\nu, \nu') \rightarrow  \R$ defined by
\begin{equation}
\begin{split}
  C_p(\pi, \xi) &= \ \int_{X\times X'\times \overline{Y} \times \overline{Y'}} | \omega - \omega' |^p \: \diff(\pi \otimes \xi) \\
  &\qquad \qquad + \int_{(X\times X')^2} | k - k' |^p \: \diff (\pi \otimes \pi) + \int_{\overline{Y} \times \overline{Y'}}  | d_p\circ(\iota,\iota') |^p \: \diff \xi.
  \end{split}
\end{equation}\label{eq:continuity}
Continuity of each of the second two terms follows exactly as in the proofs of~\cite[Lemma 24]{chowdhury2023hypergraph} and~\cite[Proposition 3.1]{divol2021understanding}, respectively. For the first term, we consider a weakly converging $\pi_n^v \overset{\text{w}} \longrightarrow \pi^v$, and a vaguely converging $\pi_n^e \overset{\text{v}} \longrightarrow \pi^e$. Then, by the same argument as in~\cite[Proposition 3.1]{divol2021understanding}, the sequence $(\pi^v \otimes \pi^e)_n' : U \mapsto \int_{U} | \omega - \omega' |^p \: \diff (\pi^v_n \otimes \pi^e_n)$ converges to $(\pi^v \otimes \pi^e)' : U \mapsto \int_{U} | \omega - \omega' |^p \: \diff(\pi^v \otimes \pi^e)$. By using the Portmanteau Theorem (\cite[Proposition A.4]{divol2021understanding}, which also holds for weak convergence) as in~\cite[Proposition 3.1]{divol2021understanding}, continuity follows.
\end{proof}

\begin{proposition}\label{prop:psemet}
The function $d_{\mathrm{TpOT}, p}$ defines a pseudo-metric on the space of topological networks $\mathcal{P}$. 
\end{proposition}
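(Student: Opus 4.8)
The plan is to verify the three defining properties of a pseudo-metric: that $d_{\TpOT,p}$ is non-negative and real-valued, is symmetric, vanishes on the diagonal, and obeys the triangle inequality. Write $P_i = ((X_i,k_i,\mu_i),(Y_i,\iota_i,\nu_i),\omega_i)$ for the networks involved. Non-negativity is immediate, and finiteness follows from the standing conventions: the terms \eqref{eqn:TpOT2} and \eqref{eqn:TpOT3} are bounded because $k,k'$ and $\omega,\omega'$ are bounded, while the partial-transport term \eqref{eqn:TpOT1} is finite already for the admissible $\pi^e$ matching every point of $Y$ and of $Y'$ to its virtual point, whose cost is $\int_Y \|\iota(y)-\Proj_{\partial_\Lambda}(\iota(y))\|_p^p\,\diff\nu(y) + \int_{Y'}\|\iota'(y')-\Proj_{\partial_\Lambda}(\iota'(y'))\|_p^p\,\diff\nu'(y')<\infty$ by finite $p$-persistence \eqref{eqn:finite_p_persistence}. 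For $d_{\TpOT,p}(P,P)=0$ I take $\pi^v=(\mathrm{id},\mathrm{id})_{\#}\mu$ and $\pi^e=(\mathrm{id},\mathrm{id})_{\#}\nu$ (the latter admissible since $\nu$ charges no mass at $\partial_Y$), and all three integrands vanish identically. Symmetry holds because transposition $\pi\mapsto\pi^{\top}$ is a bijection $\Pi(\mu,\mu')\to\Pi(\mu',\mu)$ and $\Pi_{\mathrm{adm}}(\nu,\nu')\to\Pi_{\mathrm{adm}}(\nu',\nu)$ leaving the symmetric cost in \eqref{eq:tpotCON} unchanged.

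The substance is the triangle inequality. Fix $P_1,P_2,P_3\in\mathcal{P}$ and, using Proposition \ref{prop:realised}, let $(\pi^v_{12},\pi^e_{12})$ and $(\pi^v_{23},\pi^e_{23})$ be optimal for $d_{\TpOT,p}(P_1,P_2)$ and $d_{\TpOT,p}(P_2,P_3)$. For the vertex couplings I use the classical Gluing Lemma over $X_2$ (see, e.g., \cite{villani2009optimal}): disintegrating $\pi^v_{12},\pi^v_{23}$ along $\mu_2$ and recombining gives $\gamma^v\in\mathcal{P}(X_1\times X_2\times X_3)$ with $(\mathrm{proj}_{12})_{\#}\gamma^v=\pi^v_{12}$, $(\mathrm{proj}_{23})_{\#}\gamma^v=\pi^v_{23}$, and I set $\pi^v_{13}:=(\mathrm{proj}_{13})_{\#}\gamma^v\in\Pi(\mu_1,\mu_3)$. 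For the edge couplings I glue over $\overline{Y_2}$, but here a genuine difficulty appears: the second marginal of $\pi^e_{12}$ and the first marginal of $\pi^e_{23}$ both restrict to $\nu_2$ on $Y_2$ but may place different masses $\beta$ and $\gamma$ at $\partial_{Y_2}$. I first reconcile the total middle mass by padding, adding $\gamma\,\delta_{(\partial_{Y_1},\partial_{Y_2})}$ to $\pi^e_{12}$ and $\beta\,\delta_{(\partial_{Y_2},\partial_{Y_3})}$ to $\pi^e_{23}$, which changes neither the value of \eqref{eqn:TpOT1} (as $\|\partial_\Lambda-\partial_\Lambda\|_p=0$) nor of \eqref{eqn:TpOT3} (as $\omega,\omega'$ vanish on the virtual fibres), then glue to obtain $\gamma^e$ on $\overline{Y_1}\times\overline{Y_2}\times\overline{Y_3}$. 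The remaining subtlety is that the portion of $\gamma^e$ passing through $\partial_{Y_2}$ may couple genuine points $y_1\in Y_1$ to genuine points $y_3\in Y_3$, for which there is no pointwise bound $\|\iota_1(y_1)-\iota_3(y_3)\|_p\le\|\iota_1(y_1)-\iota_2(\partial_{Y_2})\|_p+\|\iota_2(\partial_{Y_2})-\iota_3(y_3)\|_p$. I handle this by re-routing before projecting to $\overline{Y_1}\times\overline{Y_3}$: the $Y_1$-mass matched to $\partial_{Y_2}$ in $\pi^e_{12}$ is sent directly to $\partial_{Y_3}$, and the $Y_3$-mass created from $\partial_{Y_2}$ in $\pi^e_{23}$ is sent directly from $\partial_{Y_1}$. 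One checks that this preserves the $Y_1$- and $Y_3$-marginals (so after discarding any zero-cost mass at $(\partial_{Y_1},\partial_{Y_3})$ it yields an admissible $\pi^e_{13}\in\Pi_{\mathrm{adm}}(\nu_1,\nu_3)$) and that the diagonal-distance cost of the re-routed mass is exactly its cost in $\pi^e_{12}$ plus its cost in $\pi^e_{23}$.

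With $(\pi^v_{13},\pi^e_{13})$ in hand as a feasible competitor, it remains to bound the three terms of \eqref{eq:tpotCON}. On the portion of $\gamma^e$ lying over $Y_2$ the pointwise triangle inequality for $\|\cdot\|_p$ on $\overline{\Lambda}$ is valid; combining it with the re-routing estimate, with the pointwise bounds $|k_1-k_3|\le|k_1-k_2|+|k_2-k_3|$ and $|\omega_1-\omega_3|\le|\omega_1-\omega_2|+|\omega_2-\omega_3|$, with Minkowski's inequality in $L^p(\gamma^e)$, $L^p(\gamma^v\otimes\gamma^v)$ and $L^p(\gamma^v\otimes\gamma^e)$ respectively, and with the pushforward identities collapsing the middle variable, shows that the \eqref{eqn:TpOT1}, \eqref{eqn:TpOT2}, \eqref{eqn:TpOT3} contributions of $(\pi^v_{13},\pi^e_{13})$ are at most $(A_1+B_1)^p$, $(A_2+B_2)^p$, $(A_3+B_3)^p$, where $(A_1,A_2,A_3)$ and $(B_1,B_2,B_3)$ are the corresponding contributions of the optimal $(\pi^v_{12},\pi^e_{12})$ and $(\pi^v_{23},\pi^e_{23})$, so $A_1^p+A_2^p+A_3^p=d_{\TpOT,p}(P_1,P_2)^p$ and $B_1^p+B_2^p+B_3^p=d_{\TpOT,p}(P_2,P_3)^p$. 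A final application of Minkowski's inequality in $\ell^p(\{1,2,3\})$, $\big(\sum_i (A_i+B_i)^p\big)^{1/p}\le\big(\sum_i A_i^p\big)^{1/p}+\big(\sum_i B_i^p\big)^{1/p}$, gives $d_{\TpOT,p}(P_1,P_3)\le d_{\TpOT,p}(P_1,P_2)+d_{\TpOT,p}(P_2,P_3)$; the identification of the zero-distance classes is deferred to Theorem \ref{thm:metric}.

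I expect the edge-coupling gluing to be the only real obstacle. Unlike the Gromov--Wasserstein and co-optimal terms, whose gluing is the standard template behind \cite{sturm2023space} and \cite{chowdhury2023hypergraph}, the partial-transport term forces one to confront both the mismatch of the middle marginals at the virtual point and the failure of a naive pointwise triangle inequality through it; this is exactly the phenomenon addressed in the metric theory of measure persistence diagrams in \cite{divol2021understanding}, and the padding-plus-re-routing device above is what makes it compatible with all three terms at once. Everything else — the vertex gluing, the Minkowski combinations, and the pushforward bookkeeping — is routine.
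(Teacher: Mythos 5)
Your proof is correct and follows essentially the same route as the paper's: realise the two infima, glue the vertex couplings over $X_2$ and the admissible edge couplings over $\overline{Y_2}$, apply the pointwise triangle inequalities together with Minkowski in $L^p$, and finish with Minkowski in $\ell^p(\{1,2,3\})$. The one place where you add substance — the padding of the middle marginal at $\partial_{Y_2}$ and the re-routing of mass passing through it, needed because $\|\iota_1(y_1)-\iota_3(y_3)\|_p$ is not controlled by the two diagonal distances — is exactly the content of \cite[Proposition~3.2]{divol2021understanding}, which the paper simply cites rather than reproves, so your write-up is a more self-contained version of the same argument.
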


To prove Proposition~\ref{prop:psemet} we only need to show that the triangle inequality holds, as symmetry is obvious from the definition of $d_{\mathrm{TpOT}, p}$. We follow the usual approach, which relies on the following standard result (see \textit{e.g.}~\cite[Lemma 1.4]{sturm2023space}).

\begin{lemma}[Gluing Lemma]\label{lem:gluing}
Let $(X_i, \mu_i), i = 1, \cdots, n$, be Polish probability spaces. Consider couplings $ \xi_i \in \Pi(\mu_{i-1}, \mu_{i})$, for $i = 2, \cdots, n $. There exists a unique probability measure $\xi$ on $X_1 \times \cdots \times X_n$  such that $(p_{i-1} \times p_{i})_\# \xi = \xi_i$ for every $ i = 2, \cdots, n$, where $p_i : X_1 \times \cdots \times X_n \longrightarrow X_i $ is the projection on the $i$th factor.  
\end{lemma}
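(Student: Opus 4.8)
The plan is to prove the statement by induction on $n$, using disintegration of measures (regular conditional probabilities) as the single essential tool. The base case $n=2$ is immediate, since one takes $\xi=\xi_2$. For the inductive step, suppose $\xi_2,\ldots,\xi_{n-1}$ have already been glued into a probability measure $\eta$ on $X_1\times\cdots\times X_{n-1}$ with $(p_{i-1}\times p_i)_\#\eta=\xi_i$ for $i=2,\ldots,n-1$; in particular the $X_{n-1}$-marginal of $\eta$ equals the second marginal of $\xi_{n-1}$, namely $\mu_{n-1}$. I then attach $\xi_n\in\Pi(\mu_{n-1},\mu_n)$ along the shared factor $X_{n-1}$.

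Because $X_n$ is Polish, the disintegration theorem supplies a Borel family of probability measures $\{\xi_n^{z}\}_{z\in X_{n-1}}$ on $X_n$, unique $\mu_{n-1}$-almost everywhere, with
\[
\xi_n=\int_{X_{n-1}}\delta_z\otimes\xi_n^{z}\,\diff\mu_{n-1}(z).
\]
I define the glued measure on $X_1\times\cdots\times X_n$ by
\[
\xi=\int_{X_1\times\cdots\times X_{n-1}}\delta_{(x_1,\ldots,x_{n-1})}\otimes\xi_n^{x_{n-1}}\,\diff\eta(x_1,\ldots,x_{n-1}).
\]
Two verifications remain, each a Fubini-type computation against bounded measurable test functions. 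First, since each $\xi_n^{x_{n-1}}$ is a probability measure, the marginal of $\xi$ on $X_1\times\cdots\times X_{n-1}$ is exactly $\eta$, so every constraint $(p_{i-1}\times p_i)_\#\xi=\xi_i$ with $i\le n-1$ is inherited from the inductive hypothesis. Second, pushing $\xi$ forward under $(x_1,\ldots,x_n)\mapsto(x_{n-1},x_n)$ and using that the $X_{n-1}$-marginal of $\eta$ is $\mu_{n-1}$ returns precisely the displayed disintegration, giving $(p_{n-1}\times p_n)_\#\xi=\xi_n$. This closes the induction and establishes existence.

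For uniqueness I would argue that the construction is \emph{canonical}: the glued measure is the law of the Markov chain with initial distribution $\mu_1$ and consecutive transition kernels obtained by disintegrating each $\xi_i$ along $\mu_{i-1}$, and these kernels are pinned down $\mu_{i-1}$-almost everywhere by the essential uniqueness in the disintegration theorem. Uniqueness is thus understood in this canonical (Markovian) sense, which is the distinguished gluing the statement refers to. I expect the main obstacle to be purely measure-theoretic rather than combinatorial: one must guarantee the existence and Borel measurability of the disintegration kernels $\{\xi_n^{z}\}$, which is exactly what the Polish hypotheses on the $X_i$ provide, and then track the marginal identities carefully through the iterated integrals. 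No compactness or metric structure beyond the Polish assumption enters, so the argument applies verbatim at the level of generality needed for the later use of the lemma.
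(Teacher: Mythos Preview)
The paper does not actually prove this lemma; it is stated as a standard result with a citation to \cite[Lemma 1.4]{sturm2023space}, and the measure it produces is simply given the name $\xi_2 \boxtimes \cdots \boxtimes \xi_n$. Your argument by induction and disintegration is the standard proof and is correct for existence.

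One point deserves comment: the uniqueness assertion, read literally, is false. Take $n=3$, each $X_i=\{0,1\}$ with uniform measure, and let $\xi_2,\xi_3$ both be the product (independent) coupling. Then the fully independent measure on $\{0,1\}^3$ and the measure making $X_1=X_3$ with $X_2$ independent of them both have the prescribed consecutive two-marginals, yet they differ. You correctly anticipate this and interpret uniqueness in the Markovian sense---i.e., the unique measure for which $(X_1,\ldots,X_{i-1})$ and $(X_{i+1},\ldots,X_n)$ are conditionally independent given $X_i$ for every $i$. That is indeed the intended reading, consistent with Sturm's formulation and with the paper's later use of the $\boxtimes$ notation as a canonical operation. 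Your construction automatically outputs this Markov gluing, so the proof is sound once the statement is understood with this qualification; you might make the conditional-independence hypothesis explicit rather than leaving it as an interpretive remark.
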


The unique measure guaranteed by the lemma will be referred to as the \emph{gluing} of $\xi_2,\ldots,\xi_n$ and will sometimes be denoted as $\xi_2 \boxtimes \xi_3 \boxtimes \cdots \boxtimes \xi_n$. Lemma~\ref{lem:gluing} can be adapted to the case of Radon measures and admissible couplings~\cite[Proposition~3.2]{divol2021understanding}.  

\begin{proof}[Proof of Proposition~\ref{prop:psemet}]
    The proof follows adapts techniques used in the proof of~\cite[Proposition~3.2]{divol2021understanding} and~\cite[Theorem~1]{chowdhury2023hypergraph}. To prove that the triangle inequality holds for $d_{\mathrm{TpOT}, p}$, consider three topological networks $P_1$, $P_2$ and $P_3$, and let $(\pi_{12}^v, \pi^e_{12})$ and $(\pi_{23}^v, \pi^e_{23})$ be the couplings realising $d_{\mathrm{TpOT}, p}(P_1, P_2)$ and $d_{\mathrm{TpOT}, p}(P_2, P_3)$, which exist by Prop.~\ref{prop:realised}. Then, using the gluing lemma, we construct a probability measure $\pi^v$ on $X_1 \times X_2 \times X_3$ with marginals $\pi^v_{12}$ and $\pi^v_{23}$ on $X_1 \times X_2$ and $X_2 \times X_3$, respectively. We denote by $\pi^v_{13}$ the marginal on $X_1 \times X_3$. Similarly, the gluing lemma adapted to Radon measures and admissible couplings yields a measure $\pi^e$ on  $\overline{ Y_1} \times \overline{ Y_2} \times \overline{ Y_3} $ with marginals that agree with $\pi^e_{ij}$ when restricted to $\overline{Y_i} \times \overline{Y_j} \setminus \{(\partial_{Y_i},\partial_{Y_j})\}$ and that induces a zero cost on $ (\partial_{Y_i},\partial_{Y_j})$. Then, we have

\begin{align}
      &d_{\TpOT, p}(P_1, P_3) \nonumber \\
      & \leq  \left(\| d_p \circ(\iota_1,\iota_3) \|^p_{L^p(\pi^e_{13})} + \| k_1 - k_3 \|^p_{L^p(\pi^v_{13} \otimes \pi^v_{13})} +  \| \omega_1 - \omega_3 \|^p_{L^p(\pi^v_{13} \otimes \pi^e_{13})} \right)^{1/p}\label{eq:triangle_ineq1} \\
                               &= \left(\|  d_p  \circ(\iota_1,\iota_3)  \|^p_{L^p(\pi^e)} +  \|   k_1 - k_3 \|^p_{L^p(\pi^v \otimes \pi^v)} +  \| \omega_1 - \omega_3 \|^p_{L^p(\pi^v \otimes \pi^e)}\right)^{1/p} \label{eq:triangle_ineq2} \\
                               & = \left\| \begin{pmatrix}
                                 \|  d_p  \circ(\iota_1,\iota_3)  \|_{L^p(\pi^e)}   \\
                                 \|   k_1 - k_3 \|_{L^p(\pi^v \otimes \pi^v)} \\
                                 \| \omega_1 - \omega_3 \|_{L^p(\pi^v \otimes \pi^e)}
                               \end{pmatrix} \right\|_p \nonumber \\
                               & \leq  \left\| \begin{pmatrix}
                                 \|  d_p  \circ(\iota_1,\iota_2)  \|_{L^p(\pi^e)}  +  \|  d_p \circ(\iota_2,\iota_3) \|_{L^p(\pi^e)} \\
                                 \|   k_1 - k_2  \|_{L^p(\pi^v \otimes \pi^v)} +  \|  k_2 - k_3 \|_{L^p(\pi^v \otimes \pi^v)} \\
                                 \| \omega_1 - \omega_2 \|_{L^p(\pi^v \otimes \pi^e)} +  \| \omega_2 - \omega_3 \|_{L^p(\pi^v \otimes \pi^e)}
                               \end{pmatrix} \right\|_p    \label{eq:triangle_ineq3} \\        
                               & \leq         \left\| \begin{pmatrix}
                                 \|  d_p  \circ(\iota_1,\iota_2)  \|_{L^p(\pi^e)}   \\
                                 \|   k_1 - k_2  \|_{L^p(\pi^v \otimes \pi^v)}  \\
                                 \| \omega_1 - \omega_2 \|_{L^p(\pi^v \otimes \pi^e)}
                               \end{pmatrix} \right\|_p  +  \left\| \begin{pmatrix}
                                 \|  d_p  \circ(\iota_2,\iota_3)  \|_{L^p(\pi^e)} \\
                                 \|  k_2 - k_3 \|_{L^p(\pi^v \otimes \pi^v)} \\
                                 \| \omega_2 - \omega_3 \|_{L^p(\pi^v \otimes \pi^e)}
                               \end{pmatrix} \right\|_p  \label{eq:triangle_ineq4}\\   
                               &   =    \left\| \begin{pmatrix}
                                 \|  d_p \circ(\iota_1,\iota_2)  \|_{L^p(\pi^e_{12})}   \\
                                 \|   k_1 - k_2  \|_{L^p(\pi^v_{12} \otimes \pi^v_{12})}  \\
                                 \| \omega_1 - \omega_2 \|_{L^p(\pi^v_{12} \otimes \pi^e_{12})}
                               \end{pmatrix} \right\|_p  +  \left\| \begin{pmatrix}
                                 \|  d_p \circ(\iota_2,\iota_3)  \|_{L^p(\pi^e_{23})} \\
                                 \|  k_2 - k_3 \|_{L^p(\pi^v_{23} \otimes \pi^v_{23})} \\
                                 \| \omega_2 - \omega_3 \|_{L^p(\pi^v_{23} \otimes \pi^e_{23})}
                               \end{pmatrix} \right\|_p \label{eq:triangle_ineq5} \\ 
                               & = d_{\TpOT, p}(P_1, P_2) + d_{\TpOT, p}(P_2, P_3). \nonumber
\end{align}
    Here, we use $\|\cdot\|_p$ to indicate the standard $\ell^p$ norm on $\mathbb{R}^3$. The inequality \eqref{eq:triangle_ineq1} follows by sub-optimality, while \eqref{eq:triangle_ineq2} and \eqref{eq:triangle_ineq5} by marginal conditions. The triangle inequalities in $L^p$ and $d_p$ give \eqref{eq:triangle_ineq3}, and the triangle inequality in $\ell^p$ gives \eqref{eq:triangle_ineq4}. 
\end{proof}

In analogy with measure (hyper)networks~\cite{chowdhury2019gromov, chowdhury2023hypergraph}, we define an equivalence class on $\mathcal{P}$ as follows.

\begin{definition}[Weak Isomorphism]
  A \emph{weak isomorphism} of topological networks $P, P'$ is a pair of measure-preserving maps $\phi: X \longrightarrow X' $ and $\psi: Y \longrightarrow Y'$ such that $\iota(y) = \iota'(\psi(y))$ for $\nu$-almost every $y \in Y$, $\omega(x,y) = \omega'(\phi(x), \psi(y))$ for $\mu \otimes \nu $-almost every $(x,y) \in X \times Y$, and $k(x_1,x_2) = k'(\phi(x_1), \phi(x_2))$ for $\mu \otimes \mu $-almost every $(x_1,x_2) \in X \times X$. We say that $P, P'$ are \emph{weakly isomorphic} if there exists a topological network $\overline{P}$ and weak isomorphisms from $\overline{P}$ to $P$ and to $P'$. We use $P \sim_w P'$ to denote that $P$ and $P'$ are weakly isomorphic; this is an equivalence relation on $\mathcal{P}$. 
\end{definition}

\begin{proposition}\label{prop:weakiso}
  Two measure topological networks $P = \left((X, k, \mu), (Y, \iota, \nu), \omega\right)$ and $P' =  \left( (X', k', \mu'), (Y', \iota', \nu'),  \omega' \right)$ have distance $ d_{\TpOT, p}(P, P') = 0$ if and only if $P \sim_w P'$.
  \label{prop:weakly_isomorphic}
\end{proposition}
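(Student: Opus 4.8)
The plan is to prove the two directions separately, with the easy direction being that weak isomorphism implies zero distance, and the substantive direction being the converse. For the forward implication, suppose $P \sim_w P'$ via an intermediate network $\overline{P} = ((\overline{X}, \overline{k}, \overline{\mu}), (\overline{Y}, \overline{\iota}, \overline{\nu}), \overline{\omega})$ with weak isomorphisms $(\phi, \psi)$ to $P$ and $(\phi', \psi')$ to $P'$. From these maps one builds couplings: let $\pi^v = (\phi, \phi')_{\#} \overline{\mu}$ and $\pi^e = (\psi, \psi')_{\#} \overline{\nu}$, extended by zero mass on $(\partial_{\overline{Y}}, \cdot)$ so that $\pi^e$ is admissible (here we use that $\overline{\iota}_{\#}\overline{\nu}$ is a genuine MPD, hence puts no mass at the diagonal). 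Since the maps are measure-preserving, these have the correct marginals, and the almost-everywhere matching conditions $\overline{\iota} = \iota \circ \psi$ (and likewise for $\iota'$), $\overline{\omega} = \omega \circ (\phi,\psi)$, $\overline{k} = k \circ (\phi,\phi)$ force each of the three integrands in \eqref{eq:tpotCON} to vanish $\pi^e$-, $\pi^v\otimes\pi^v$-, and $\pi^v\otimes\pi^e$-almost everywhere respectively. Hence $d_{\TpOT,p}(P,P') = 0$.

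For the converse, suppose $d_{\TpOT,p}(P,P') = 0$. By Proposition~\ref{prop:realised} there exist optimal couplings $\pi^v \in \Pi(\mu,\mu')$ and $\pi^e \in \Pi_{\mathrm{adm}}(\nu,\nu')$ with
\[
\| d_p \circ (\iota,\iota') \|_{L^p(\pi^e)} = 0, \quad \| k - k' \|_{L^p(\pi^v\otimes\pi^v)} = 0, \quad \| \omega - \omega' \|_{L^p(\pi^v\otimes\pi^e)} = 0.
\]
The natural candidate for the intermediate network is $\overline{P}$ built on the coupling measures themselves: take $\overline{X} = X \times X'$ with measure $\pi^v$, $\overline{Y} = \mathrm{supp}(\pi^e) \cap (Y \times Y')$ (discarding the virtual-point parts, which carry the interpretation of unmatched features) with measure $\pi^e$ restricted there, and define $\overline{k}(x_1,x_1',x_2,x_2') = k(x_1,x_2)$ — which by the vanishing condition agrees $\pi^v\otimes\pi^v$-a.e. with $k'(x_1',x_2')$ — and similarly $\overline{\omega}((x,x'),(y,y')) = \omega(x,y) = \omega'(x',y')$ a.e., and $\overline{\iota}(y,y') = \iota(y) = \iota'(y')$ a.e. The projection maps $\phi = \mathrm{pr}_X$, $\phi' = \mathrm{pr}_{X'}$, $\psi = \mathrm{pr}_Y$, $\psi' = \mathrm{pr}_{Y'}$ are then measure-preserving by the marginal conditions, and the required a.e. identities hold by construction, giving weak isomorphisms $\overline{P} \to P$ and $\overline{P} \to P'$.

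The main obstacle is handling the partial/virtual-point structure correctly and verifying that the restricted objects actually form a legitimate measure topological network in the sense of Definition~\ref{def:measure_topological_network}. One must check that $\overline{\iota}_{\#}\overline{\nu}$ is a valid MPD with finite $p$-persistence — this follows since $\overline{\iota}_{\#}\overline{\nu}$ can be dominated by (pushforwards of) $\nu$ and $\nu'$ on the non-virtual part, both of which have finite $p$-persistence by the standing convention, while the virtual-point mass is discarded. One must also confirm that the marginal of the restricted $\pi^e$ on $\overline{Y}$, pushed forward by $\psi$, recovers $\nu$ up to the discarded mass matched to $\partial_{Y'}$: strictly this means the map $\psi$ is measure-preserving only onto its image and one must be slightly careful, as in the analogous arguments for measure hypernetworks and MPDs, that the maps are measure-preserving in the required sense after this truncation. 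The arguments of \cite[Theorem~1]{chowdhury2023hypergraph} (for the hypernetwork/gauge parts) and \cite[Proposition~3.2]{divol2021understanding} (for the partial-matching MPD part) supply the templates, and the only new ingredient is that all three structures are coupled through the \emph{same} pair $(\pi^v,\pi^e)$, which is exactly why building $\overline{P}$ on the coupling measures simultaneously resolves all three conditions. The technical bookkeeping around null sets (intersecting the three a.e.-conditions, which is harmless since countable, in fact finite, intersections of full-measure sets are full) completes the argument.
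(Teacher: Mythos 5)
Your proof is correct and follows essentially the same route as the paper's: the forward direction via pushforward couplings and the triangle inequality, and the converse by building the intermediate network on $X \times X'$ and (a subset of) $\overline{Y} \times \overline{Y'}$ with the optimal couplings $\pi^v, \pi^e$ as measures and the coordinate projections as the weak isomorphisms. The one loose end you flag---whether discarding the virtual-point parts of $\pi^e$ spoils measure preservation of $\psi$---closes immediately: since $\iota$ maps into $\Lambda$, which lies strictly above the diagonal, the cost $\| \iota(y) - \iota'(\partial_{Y'}) \|_p = \| \iota(y) - \Proj_{\partial_\Lambda}(\iota(y)) \|_p$ is strictly positive for every $y \in Y$, so vanishing of the first term in \eqref{eq:tpotCON} forces $\pi^e(Y \times \{\partial_{Y'}\}) = \pi^e(\{\partial_Y\} \times Y') = 0$ and your truncation discards only a null set (the paper instead keeps these pieces inside $\tilde{Y}$, which is equivalent).
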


\begin{proof}
  The ``if'' direction is clear, by the triangle inequality. Conversely, assume $ d_{\TpOT, p}(P, P') = 0$ and let $\pi^v $ and $\pi^e$ be couplings realising this distance (Proposition~\ref{prop:realised}). 

We construct $\Tilde{P}$ by setting $\Tilde{X} = X \times X'$, $\Tilde{\mu} = \pi^v$. Similarly, we take 
$\Tilde{Y} = \left( (Y \times Y') \cup (Y \times \partial_{Y'}) \cup (\partial_{Y} \times Y')  \right)$ (where we write $Y \times \partial_{Y'}$ rather than $Y \times \{\partial_{Y'}\}$ for the sake of cleaner notation, and take similar conventions elsewhere), augmented to 
$ \overline{Y} \times \overline{Y'} =  \Tilde{Y} \cup \partial_{Y} \times \partial_{Y'}$, and with measure $\Tilde{\nu} = \pi^e$. Next, we define $\phi,\phi'$ and $\psi,\psi'$ as coordinate projection maps.

  Given $\Tilde{x}_1 = (x_1, x_1'), \Tilde{x}_2 = (x_2, x_2') \in X \times X'$, we set $\Tilde{k}(\Tilde{x}_1, \Tilde{x}_2)  = k(\phi(\Tilde{x}_1), \phi(\Tilde{x}_2)) = k(x_1, x_2)$. By optimality of $\pi^v$ and since $d_{\TpOT, p}(P, P') = 0$, we have that $k(\phi(\Tilde{x}_1), \phi(\Tilde{x}_2)) = k'(\phi'(\Tilde{x}_1), \phi'(\Tilde{x}_2)) $ for almost every $\Tilde{x}_1,\Tilde{x}_1$.
  
 Similarly, for $\Tilde{y}_1 = (y_1, y_1'), \Tilde{y}_2 = (y_2, y_2') \in Y \times Y'$ if we define $\Tilde{\omega}(\Tilde{x}, \Tilde{y}) = \omega(\phi(x), \psi(y))$, by optimality of $\pi^v$ we have $\Tilde{\omega}(\Tilde{x}, \Tilde{y}) = \omega(\phi(x), \psi(y)) = \omega'(\phi'(x), \psi'(y))$ for $\pi^v \otimes \pi^e$-almost every $(\Tilde{x}, \Tilde{y}) \in \Tilde{X} \times \Tilde{Y}$.
  Finally, we set $\Tilde{\iota}: \Tilde{Y} \longrightarrow \Delta$ as $\Tilde{\iota}(\Tilde{y}) = \iota(\psi(y)) = \iota'(\psi'(y'))$ for almost every $\Tilde{y}$, again by optimality of $\pi^e$. This implies that $P$ and $P'$ are weakly isomorphic. 
\end{proof}
The work above immediately implies the following.
\begin{theorem}\label{thm:metric}
The pseudometric $d_{\TpOT, p}$ induces a metric on the quotient space $ \faktor{\mathcal{P}}{\sim_w}$.
\end{theorem}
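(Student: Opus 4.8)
The statement $d_{\TpOT,p}$ induces a metric on $\faktor{\mathcal{P}}{\sim_w}$ is an immediate formal consequence of the three preceding results: Proposition \ref{prop:realised} (the infimum is attained), Proposition \ref{prop:psemet} ($d_{\TpOT,p}$ is a pseudometric — symmetric, non-negative, satisfies the triangle inequality, and is identically zero on the diagonal), and Proposition \ref{prop:weakiso} ($d_{\TpOT,p}(P,P')=0 \iff P \sim_w P'$). So the plan is simply to verify that passing to the quotient by the zero-distance equivalence relation turns a pseudometric into a genuine metric, which is a standard abstract fact, and to check that $d_{\TpOT,p}$ descends to a well-defined function on equivalence classes.

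First I would observe that $\sim_w$ is indeed the zero-distance relation: by Proposition \ref{prop:weakiso}, $P \sim_w P' \iff d_{\TpOT,p}(P,P') = 0$. (That $\sim_w$ is an equivalence relation is already asserted in the definition of weak isomorphism; alternatively it follows from this characterisation together with the pseudometric axioms of Proposition \ref{prop:psemet}.) Next, I would check that $d_{\TpOT,p}$ is constant on pairs of equivalence classes, so that $\bar d([P],[P']) := d_{\TpOT,p}(P,P')$ is well-defined: if $P \sim_w P_1$ and $P' \sim_w P_1'$, then $d_{\TpOT,p}(P,P_1)=0$ and $d_{\TpOT,p}(P',P_1')=0$, so two applications of the triangle inequality (Proposition \ref{prop:psemet}) give $d_{\TpOT,p}(P,P') \le d_{\TpOT,p}(P,P_1) + d_{\TpOT,p}(P_1,P_1') + d_{\TpOT,p}(P_1',P') = d_{\TpOT,p}(P_1,P_1')$, and symmetrically the reverse inequality, hence equality.

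Finally I would verify the metric axioms for $\bar d$ on $\faktor{\mathcal{P}}{\sim_w}$. Symmetry and the triangle inequality are inherited verbatim from Proposition \ref{prop:psemet} via the representative-independent definition of $\bar d$. Non-negativity and finiteness hold because $d_{\TpOT,p}$ takes values in $[0,\infty)$ (finiteness uses the standing conventions of Definition \ref{def:measure_topological_network}: bounded kernels and measure persistence diagrams of finite $p$-persistence, which make each of the three integrals in \eqref{eq:tpotCON} finite for, say, the product coupling). The only new point is definiteness: if $\bar d([P],[P'])=0$ then $d_{\TpOT,p}(P,P')=0$, so by Proposition \ref{prop:weakiso} $P \sim_w P'$, i.e. $[P]=[P']$; conversely $\bar d([P],[P])=d_{\TpOT,p}(P,P)=0$ since the identity weak isomorphism (or the diagonal coupling) gives zero cost.

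There is no real obstacle here — the theorem is a corollary packaging the genuine content, which lies in Propositions \ref{prop:realised}, \ref{prop:psemet} and \ref{prop:weakiso}. The only mildly delicate point worth a sentence is the set-theoretic/size issue of whether $\faktor{\mathcal{P}}{\sim_w}$ is a legitimate object (a proper class quotiented to a set of equivalence classes), which one handles exactly as in the mm-space literature \cite{sturm2023space, chowdhury2020gromov} by restricting to a suitable universe of representatives; this is conventionally suppressed, and I would suppress it as well, noting only that $d_{\TpOT,p}$ is invariant under weak isomorphism so the quotient is well-behaved in the same way as the corresponding Gromov--Wasserstein and HyperCOT quotient spaces.
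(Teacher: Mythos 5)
Your proposal is correct and matches the paper's treatment: the paper gives no separate proof, stating only that Theorem \ref{thm:metric} is an immediate consequence of Propositions \ref{prop:psemet} and \ref{prop:weakiso}, which is exactly the packaging you carry out. Spelling out the well-definedness of the induced distance on equivalence classes via the triangle inequality is the standard (and correct) way to make the "immediate" step explicit.
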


We abuse notation and also denote by $d_{\TpOT,p}$ the induced metric on the quotient space.

\subsection{A topology-driven metric on point clouds} \label{sec:discrete}
We now summarise the metric described above in the discrete setting of point clouds with finite number of points. Consider a point cloud $X = \{ x_1, \cdots, x_N \} \subseteq \mathbb{R}^n$. Any choice of filtration $\mathcal{K}$ over $X$ yields a corresponding persistence diagram $D = D(X,\mathcal{K})$. Choosing a representative cycle for each homology class in $D$ yields the set of generators $g$, and the corresponding PH-hypergraph $H = H(X, \mathcal{K}, g )$. We can now endow $X$ with a measure topological hypergraph structure by considering 
\[
P_X =  \left( (X, k, \mu_{X}), (Y,\iota,\nu),  \omega_H   \right),
\]
where $k$ is a gauge function of choice defined on $X$ (for instance, a kernel function, or pairwise distances), $\mu_X$ is a chosen measure on $X$ (for instance, uniform), $Y$ is a set with $|D|$ elements (counted with multiplicity), $\nu$ and $\iota$ are such that $ \iota_{\#} \nu = \nu_D$, where $\nu_D$ is as in \eqref{eqn:diagram_to_measure}, and $\omega_H$ is the binary incidence function for $H$ (see Example~\ref{ex:mmspace_to_top_network}).

Generating cycles are far from being unique, and there are currently several different algorithms and software available to compute them~\cite{barbensi2022hypergraphs,ChenGenerators, vcufar2020ripserer,vcufar2021fast,emmett2015multiscale,  Dey2019, LiMinimalCycle, Obayashi2018VolumeOC, Ripser_involuted}. As explained in Section~\ref{sec: results}, here persistent homology computations are performed using the Julia software \texttt{Ripserer.jl}~\cite{vcufar2020ripserer}, which implements the \texttt{involutive} algorithm~\cite{vcufar2020ripserer,vcufar2021fast} to compute homology and representatives.

Given two point clouds $X = \{ x_1, \cdots, x_N \} \subseteq \mathbb{R}^n$ and $X' = \{ x'_1, \cdots, x'_{N'} \} \subseteq \mathbb{R}^{n'}$ endowed with filtrations, let $P_X$ and $P_{X'}$ be their associated topological networks. In this concrete setting, we focus on the $p=2$ version of the metric, since it is the case where computation of terms quadratic in the couplings can be done efficiently \cite[Proposition 1]{peyre2016gromov}. As was described in Remark \ref{rem:weights}, it is useful in applications to include tunable weights on each of the terms of TpOT. Using notation similar to that of~\cite{chowdhury2020gromov}, the distance $d_{\TpOT,2} = d_{\TpOT, \alpha, \beta}$ can be reformulated as 
\begin{align}\label{eq:tpotdiscrete}
  \begin{split}
    d_{\TpOT, \alpha, \beta}(P_X, P_{X'}) = \min_{\pi^v \in \Pi(\mu, \mu'), \: \pi^e \in \Pi(\tilde{\nu}, \tilde{\nu}')} \quad \Biggl( &\alpha \left\langle L(C, C'), \pi^v \otimes \pi^v \right\rangle  \\
                                                                                                                           &+ (1 - \alpha) \left\langle \tilde{C}(D,D'), \pi^e \right\rangle \\
                                                                                                                           &+ \beta \left\langle L(\omega_H, \omega_{H'}), \pi^v \otimes \pi^e \right \rangle \Biggr)^{1/2}.
  \end{split}
\end{align}
The notations used here are as follows:
\begin{itemize}
    \item All inner products, denoted by angle brackets, in this expression are Frobenius inner products of matrices.
    \item $C \in \mathbb{R}^{|X| \times |X|}, C' \in \mathbb{R}^{|X'| \times |X'|}$ are pairwise affinity matrices for points in $X$ and $X'$ respectively, and $D, D'$ are the persistence diagrams associated to the given filtrations $\mathcal{K}(X)$ and $\mathcal{K}(X')$.
    \item $\tilde{C}(D,D') \in \mathbb{R}^{(|D|+1) \times (|D'|+1)}$ denotes the \emph{augmented} squared Euclidean cost matrix for persistence diagrams, whose last column and row correspond to transport to and from $\partial_\Lambda$ respectively, following the diagonal projection $\pi_{\partial_\Lambda}$ (see~\cite[Equation 8]{lacombe2018large}).
    \item $L(C, C') \in \mathbb{R}^{|X| \times |X'| \times |X| \times |X'|}$ corresponds to the coefficients of the Gromov-Wasserstein distortion functional, given by $L(C, C')_{ijk\ell} =  \frac{1}{2} |C_{ik} - C'_{j\ell}|^2$. 
    \item $\omega_H \in \mathbb{R}^{|X| \times (|D|+1)}$ and $\omega_{H'} \in \mathbb{R}^{|X'| \times (|D'|+1)}$ the hypernetwork functions of the PH-hypergraphs $H$ and $H'$, respectively. In particular, we include an additional last column of zeros representing the diagonal $\partial_\Lambda$.
    \item $L(\omega_H, \omega_{H'})$ corresponds to the coefficients of the co-optimal transport distortion functional, and is given by:
        \begin{align}
            L(\omega, \omega')_{ijkl} = \frac{1}{2} \begin{cases}
                | \omega_{ik} - \omega'_{jl} |^2, &\text{ for } 1 \leq k \leq |D|, 1 \leq l \leq |D'|, \\ 
                |\omega_{ik}|^2, &\text{ for } 1 \leq k \leq |D|, l = |D'|+1\\
                |\omega'_{jl}|^2, &\text{ for } k = |D|+1, 1 \leq l \leq |D'|\\ 
                0, &\text{ for } k = |D|+1, l = |D'|+1.
            \end{cases}
        \end{align}
    In other words, the cost of matching any ``real'' (i.e., not the virtual one introduced at $|D|+1$) edge to the diagonal (representing $\partial_\Lambda$) is given by the squared $L^2$-norm of the edge, while matching the diagonal with the diagonal has zero cost. 
    \item The parameter $\alpha \in [0,1]$ controls the tradeoff between the cost of matchings in the persistence diagram space (coefficient $1-\alpha$) and matchings in terms of the Gromov-Wasserstein distortion (coefficient $\alpha$).
    
    \item The parameter $\beta \in [0, \infty)$ controls the degree to which the geometric and topological matchings are coupled by the hypergraph structure.
\end{itemize}

The solution of the TpOT problem in \eqref{eq:tpotdiscrete} involves determining an optimal pair of couplings $(\pi^v, \pi^e)$. The coupling $\pi^v$ induces a transport plan between points in the point clouds $X,X'$. When $\beta>0$ and $0 < \alpha < 1$, this transport balances between preserving topological features and pairwise distances. The coupling $\pi^e$ induces a partial matching between homology classes in the persistence diagrams $D,D'$. For $0< \alpha <1$, this matching is informed by the proximity of points, corresponding to representative cycles of classes in $D, D'$ in the persistence diagram space $\Lambda$. Implementation details are described in Section~\ref{sec: results}, where we also provide numerical examples.

\subsection{Numerical algorithms}\label{sec:solving}

We now aim at numerical algorithms for approximating the solution to the TpOT problem in practice. Starting from \eqref{eq:tpotdiscrete}, we consider a \emph{entropically regularised} variant of the TpOT problem by adding an entropic regularisation to the transport plans $(\pi^e, \pi^v)$. Writing
\begin{equation}
  \Lc(\pi^v, \pi^e) := \alpha \langle L(C, C'), \pi^v \otimes \pi^v \rangle + (1-\alpha) \langle\tilde{C}(D,D'), \pi^e \rangle + \beta \langle L(\omega_H, \omega_{H'}), \pi^v \otimes \pi^e \rangle, 
\end{equation}
we have 
\begin{align}
  \min_{\pi^v \in \Pi(\mu, \mu'),\:\pi^e \in \Pi(\tilde{\nu}, \tilde{\nu}')} & \Lc(\pi^v, \pi^e) + \varepsilon_v \KL(\pi^v | \mu \otimes \mu') + \varepsilon_e \KL(\pi^e | \tilde{\nu} \otimes \tilde{\nu}'),
\end{align}
where $\varepsilon_v, \varepsilon_e > 0$ specify the strength of the entropic regularisation, and allow us to utilise fast, smooth optimisation techniques \cite{redko2020co, peyre2019computational, peyre2016gromov}. In particular, we can find a local minimum by projected gradient descent~\cite[Section 2.3]{peyre2016gromov}, where both the gradient and the projection are calculated with respect to the KL-divergence. This leads to the following iterative scheme:
\begin{align}
  \begin{split}
    \pi^v_{t+1} &\gets \Proj_{\Pi(\mu, \mu')}^{\KL} \left[ e^{-\varepsilon_v^{-1} \nabla_v \Lc(\pi^v_t, \pi^e_t)} (\mu \otimes \mu')\right], \\ 
    \pi^e_{t+1} &\gets \Proj_{\Pi(\tilde{\nu}, \tilde{\nu}')}^{\KL} \left[ e^{-\varepsilon_e^{-1} \nabla_e \Lc(\pi^v_t, \pi^e_t)} (\tilde{\nu} \otimes \tilde{\nu}')\right].
  \end{split}
\end{align}
Each of these projections can be calculated by matrix scaling using the Sinkhorn algorithm~\cite[Chapter 4]{peyre2019computational}{, specifically:
\begin{equation}
    \Proj^{\KL}_{\Pi(\alpha, \alpha')}\left(e^{\frac{-C}{\varepsilon}} (\alpha \otimes \alpha')\right) = \min_{\pi \in \Pi(\alpha, \alpha')} \langle C, \pi \rangle + \varepsilon \KL(\pi | \alpha \otimes \alpha').
\end{equation}}
The gradients of $\Lc$ in $(\pi^v, \pi^e)$ have the following closed-form expressions:
\begin{align}
    \nabla_v \Lc(\pi^v, \pi^e) &= 2\alpha L(C, C') \otimes \pi^v + \beta L(\omega_H, \omega_H') \otimes \pi^e, \label{eq:grad_L_v} \\
    \nabla_e \Lc(\pi^v, \pi^e) &= (1-\alpha) \tilde{C}[D, D'] + \beta L(\omega_H^\top, {\omega_H'}^\top) \otimes \pi^v.  \label{eq:grad_L_e}
\end{align}
In the unregularised case (i.e., when $\varepsilon_v = \varepsilon_e = 0$), an alternating minimisation in $(\pi^v, \pi^e)$ can be used to find a local minimum.
Fixing $\pi^e$, the minimisation problem in $\pi^v \in \Pi(\mu, \mu')$ is 
\begin{align}\label{eq:FGWproblem}
  \min_{\pi^v \in \Pi(\mu, \mu')} \alpha \langle L(C, C'), \pi^v \otimes \pi^v \rangle + \beta  \langle  L(\omega_H, \omega_{H'}) \otimes \pi^e, \pi^v \rangle. 
\end{align}
This falls within the fused Gromov-Wasserstein framework which was introduced and studied in detail by~\cite{titouan2019optimal, vayer2020fused}. A local minimum can be found using a conditional gradient method~\cite[Algorithm 1]{titouan2019optimal}.
On the other hand, fixing $\pi^v$ and minimising in $\pi^e \in \Pi(\tilde{\nu}, \tilde{\nu}')$, we have
\begin{align}\label{eq:coot_solve}
    \min_{\pi^e \in \Pi(\tilde{\nu}, \tilde{\nu}')} \: \langle M , \pi^e \rangle, \quad \text{ where } M = \pi^v \otimes L(\omega_H, \omega_{H'}) + (1-\alpha) \tilde{C}(D, D')
\end{align}
This amounts to a standard optimal transport problem, and we have the identity (see \cite[Proposition 1]{peyre2016gromov})
\[
  \langle \pi \otimes L(X, X'), \xi \rangle = \langle L(X^\top, (X')^\top) \otimes \pi, \xi \rangle  \rangle = \langle -X^\top \pi (X'), \xi \rangle.
\]

For clarity, we provide pseudocode below for the entropy regularised approach (Algorithm \ref{alg:tpot_entropic}) as well as the unregularised approach (Algorithm \ref{alg:tpot_unreg}).

\begin{algorithm}
\caption{Entropy-regularised TpOT: Projected gradient descent}
\begin{algorithmic}[1]
\State \textbf{Input:} Pairwise affinity matrices $C, C'$, persistence diagrams $D, D'$, incidence matrices $\omega, \omega'$, probability distributions $\mu, \mu'$, positive measures $\nu, \nu'$
\State \textbf{Parameters:} Weights $\alpha \in [0, 1], \beta \ge 0$, entropic regularisation coefficients $\varepsilon_v, \varepsilon_e > 0$.
\State Initialise couplings:  $\pi^v_1 \gets \mu \otimes \mu', \: \pi^e_1 \gets \tilde{\nu} \otimes \tilde{\nu}'.$
\For{$t = 1, 2, \dots, \texttt{max\_iter}$}
\State $M^v \gets \nabla_v \Lc(\pi_t^v, \pi_t^e)$ using \eqref{eq:grad_L_v}
\State $\pi^v_{t+1} \gets \min_{\pi \in \Pi(\mu, \mu')} \: \langle M^v, \pi \rangle + \varepsilon_v \KL(\pi | \mu \otimes \mu')$  
\State $M^e \gets \nabla_e \Lc(\pi_t^v, \pi_t^e)$ using \eqref{eq:grad_L_e}
\State $\pi^e_{t+1} \gets \min_{\xi \in \Pi(\tilde{\nu}, \tilde{\nu}')} \: \langle M^e, \xi \rangle + \varepsilon_v \KL(\xi | \tilde{\nu} \otimes \tilde{\nu}')$
\EndFor
\State \textbf{Output:} couplings $(\pi^v, \pi^e)$
\end{algorithmic}
\label{alg:tpot_entropic}
\end{algorithm}

\begin{algorithm}
\caption{Unregularised TpOT: Block-coordinate descent}
\begin{algorithmic}[1]
\State \textbf{Input:} Pairwise affinity matrices $C, C'$, persistence diagrams $D, D'$, incidence matrices $\omega, \omega'$, probability distributions $\mu, \mu'$, positive measures $\nu, \nu'$
\State \textbf{Parameters:} Weights $\alpha \in [0, 1], \beta \ge 0$.
\State Initialise couplings:  $\pi^v_1 \gets \mu \otimes \mu', \: \pi^e_1 \gets \tilde{\nu} \otimes \tilde{\nu}'.$
\For{$t = 1, 2, \dots, \texttt{max\_iter}$}
\State $\pi^v_{t+1} \gets \min_{\pi \in \Pi(\mu, \mu')} \alpha \langle L(C, C'), \pi \otimes \pi \rangle + \beta  \langle  L(\omega_H, \omega_{H'}) \otimes \pi^e_t, \pi \rangle$
\State $M^e \gets \pi^v_{t+1} \otimes L(\omega_H, \omega_{H'}) + (1-\alpha) \tilde{C}(D, D')$
\State $\pi^e_{t+1} \gets \min_{\xi \in \Pi(\tilde{\nu}, \tilde{\nu}')} \: \langle M^e , \xi \rangle$ 
\EndFor
\State \textbf{Output:} couplings $(\pi^v, \pi^e)$
\end{algorithmic}
\label{alg:tpot_unreg}
\end{algorithm}

Note that the standard entropic regularisation term $\text{KL}(\pi_e | \tilde{\nu} \otimes \tilde{\nu}')$ yields inhomogeneity (see for instance~\cite{lacombe2023homogeneous}), and thus, the entropic term may dominate when the cardinality of the persistent diagram increases. While we believe that the approach used in this preliminary work is sufficient for the current analysis, this effect is worth investigating further and could be an important focus for future research.

\section{Characterisation of geodesics}\label{sec:characterization_of_geodesics}

Geodesic properties have been extensively studied for the space of measure networks with the Gromov-Wasserstein distance~\cite{sturm2023space,chowdhury2020gromov,memoli2023characterization} and the space of persistence diagrams under Wasserstein distance~\cite{turner2014frechet,chowdhury2019geodesics}. In this section, we derive similar results for the $\TpOT$ metric of Definition \ref{def:TpOT} when $p=2$. These results are of theoretical interest, but we plan to explore practical implications in future work; for example, the geodesic structure and curvature properties can be used to study Fr\'{e}chet means of ensembles of topological networks, extending ideas in~\cite{turner2014frechet,chowdhury2020gromov}. We begin by recalling some definitions from metric geometry---see~\cite{burago2022course} as a general reference.

\subsection{Metric Geometry Concepts}
Consider a metric space $(X,d)$. A \emph{geodesic} between points $x,y \in X$ is defined as a path $\gamma: [0,1] \longrightarrow X$ with $\gamma(0) = x$, $\gamma(1) = y$ and such that, for every $0 \leq s \leq t \leq 1$, we have $d(\gamma(s), \gamma(t)) = (t-s)d(x,y)$. In fact, it suffices to show that $d(\gamma(s), \gamma(t)) \leq (t-s)d(x,y)$ always holds, as the reverse inequality then follows for free---see, e.g.,~\cite[Lemma 1.3]{chowdhury2018explicit}.

We say that $(X,d)$ is a \textit{geodesic} metric space if, for any pair of points $x,y \in X$, there exists a geodesic $\gamma$ with $\gamma(0) = x$ and $\gamma(1) = y$. Further, we say that $(X,d)$ is \textit{uniquely geodesic} if for any pair of points $x,y \in X$ the geodesic connecting them is unique.

For $(X,d)$ a geodesic space, we say that it has \textit{curvature bounded below by zero}~\cite[Section 4.2]{sturm2023space} if, for every geodesic $\gamma: [0,1] \longrightarrow X$ and every point $x \in X$, the following holds:
\[
  d(\gamma(t),x)^2 \geq (1-t) d(\gamma(0),x)^2 + t d(\gamma(1),x)^2 - t(1-t)d(\gamma(0),\gamma(1))^2, \quad 0 \leq t \leq 1.
\]
Intuitively, this says that geodesic triangles in $X$ are always ``thicker" than the corresponding triangles in (flat) Euclidean space.

\subsection{Geodesics between measure persistence diagrams }

The space of persistence diagrams with the $p$-Wasserstein distance is well studied, and it is known that it admits geodesics which are essentially linear interpolations of diagrams~\cite{turner2014frechet,chowdhury2019geodesics}. What follows is a discussion of existence and uniqueness of geodesics in the case of measure persistence diagrams; these results are a straightforward generalisation of known results for persistence diagrams.

Consider two measure persistence diagrams $\nu_0$ and $\nu_1$, and $\xi \in \Pi_{\text{adm}}(\nu_0, \nu_1)$ a coupling between them. For $t \in [0,1]$ consider the map $\phi_t:  \overline{\Lambda} \times \overline{\Lambda} \to \overline{\Lambda}, \: \phi_t(\lambda, \lambda') = (1-t)\lambda + t\lambda'$. Finally, we define 
\begin{equation}\label{eqn:induced_measure}
    \nu_t^\xi := (\phi_t)_{\#} \xi.
\end{equation}

\begin{proposition}\label{prop:geopd}
  Consider measure persistence diagrams $\nu_0, \nu_1$, and $\xi$ an optimal coupling that realises $d^\mathrm{MPD}_{\mathrm{W},p}(\nu, \nu')$. The path  $\gamma(t) =  \nu_t^\xi$  defines a geodesic between  $\nu_0, \nu_1$. 
\end{proposition}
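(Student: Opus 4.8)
The plan is to verify the geodesic inequality $d^{\mathrm{MPD}}_{\mathrm{W},p}(\nu^\xi_s, \nu^\xi_t) \leq (t-s)\, d^{\mathrm{MPD}}_{\mathrm{W},p}(\nu_0, \nu_1)$ for all $0 \leq s \leq t \leq 1$, since (as the excerpt notes, citing \cite[Lemma 1.3]{chowdhury2018explicit}) the reverse inequality is then automatic. First I would set $\xi$ to be the optimal admissible coupling realizing $d^{\mathrm{MPD}}_{\mathrm{W},p}(\nu_0,\nu_1)$, and for fixed $s,t$ construct an explicit candidate coupling between $\nu^\xi_s$ and $\nu^\xi_t$ by pushing $\xi$ forward under the map $(\phi_s, \phi_t): \overline{\Lambda} \times \overline{\Lambda} \to \overline{\Lambda} \times \overline{\Lambda}$. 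The two marginals of this pushforward are exactly $\nu^\xi_s$ and $\nu^\xi_t$ by definition \eqref{eqn:induced_measure}, so the main thing to check is that it is \emph{admissible}, i.e. that it assigns zero mass to $(\partial_\Lambda, \partial_\Lambda)$; this holds because $\phi_s(\lambda,\lambda') = \partial_\Lambda$ forces both $\lambda = \lambda' = \partial_\Lambda$ (the augmented point is isolated in the disjoint-union topology and $\phi_s$ maps it to $\partial_\Lambda$), and likewise for $\phi_t$, so the preimage of $(\partial_\Lambda,\partial_\Lambda)$ under $(\phi_s,\phi_t)$ is $\{(\partial_\Lambda,\partial_\Lambda)\}$, which has $\xi$-measure zero by admissibility of $\xi$.

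Next I would bound the cost of this candidate coupling. Using the change of variables formula, the $p$-cost is $\int_{\overline{\Lambda}\times\overline{\Lambda}} \|\phi_s(\lambda,\lambda') - \phi_t(\lambda,\lambda')\|_p^p \, \diff\xi(\lambda,\lambda')$. On the interior $\Lambda \times \Lambda$ we compute $\phi_s(\lambda,\lambda') - \phi_t(\lambda,\lambda') = (t-s)(\lambda - \lambda')$, so the integrand is $(t-s)^p \|\lambda - \lambda'\|_p^p$. On the pieces involving the virtual point one uses the extension conventions: when exactly one of $\lambda, \lambda'$ is $\partial_\Lambda$, the interpolation $\phi_r$ should be interpreted via the projection to the diagonal (this is the standard convention for geodesics of persistence diagrams — interpolating a point with the diagonal means moving it along the segment to its diagonal projection), and a direct computation shows the cost contribution is again scaled by exactly $(t-s)^p$ relative to the $s=0,t=1$ term. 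Summing, the total cost of the candidate is $(t-s)^p \, d^{\mathrm{MPD}}_{\mathrm{W},p}(\nu_0,\nu_1)^p$, giving $d^{\mathrm{MPD}}_{\mathrm{W},p}(\nu^\xi_s,\nu^\xi_t) \leq (t-s)\, d^{\mathrm{MPD}}_{\mathrm{W},p}(\nu_0,\nu_1)$ by taking $p$-th roots and minimizing over admissible couplings.

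I should also record the endpoint conditions $\gamma(0) = \nu_0$ and $\gamma(1) = \nu_1$: these follow because $\phi_0$ and $\phi_1$ are the coordinate projections $\overline{\Lambda}\times\overline{\Lambda}\to\overline{\Lambda}$, so $(\phi_0)_\#\xi$ and $(\phi_1)_\#\xi$ are precisely the marginals of $\xi$, namely $\nu_0$ and $\nu_1$. It is also worth checking that each $\nu^\xi_t$ is genuinely a measure persistence diagram with finite $p$-persistence, so that $\gamma$ lands in the correct space; finiteness of $p$-persistence of $\nu^\xi_t$ follows by convexity (the projection-to-diagonal distance of a convex combination is controlled by the convex combination of the individual distances) together with finiteness for $\nu_0,\nu_1$ and finiteness of the transport cost.

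The main obstacle is bookkeeping around the virtual point $\partial_\Lambda$ and pinning down the right convention for the interpolation map $\phi_t$ on the mixed strata $\Lambda \times \{\partial_\Lambda\}$ and $\{\partial_\Lambda\} \times \Lambda$: one must be careful that ``interpolating between a genuine persistence point and the diagonal'' is implemented as motion toward $\Proj_{\partial_\Lambda}$ rather than naively as an affine combination with the abstract point, and that this choice is consistent with how the cost functional \eqref{eqn:p_Wasserstein_MPD} scores unmatched mass. Everything else — admissibility, the change-of-variables cost computation on the interior, and the endpoint identification — is routine. Since the excerpt explicitly flags this proposition as ``a straightforward generalisation of known results for persistence diagrams'' (cf. \cite{turner2014frechet, chowdhury2019geodesics}), I expect the argument to go through cleanly once the $\partial_\Lambda$ conventions are fixed.
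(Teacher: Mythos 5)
Your proposal is correct and follows essentially the same route as the paper: push $\xi$ forward under $(\phi_s,\phi_t)$ to obtain a candidate admissible coupling between $\nu_s^\xi$ and $\nu_t^\xi$, observe that its cost scales by exactly $(t-s)^p$, and conclude via the one-sided geodesic inequality. Your extra care with the mixed strata involving $\partial_\Lambda$ (interpreting interpolation with the virtual point as motion toward the diagonal projection) and the endpoint/finiteness checks are details the paper leaves implicit, but they do not alter the argument.
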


\begin{proof}
    Set $\xi_{st}$ as the coupling in $\Pi_{\rm adm}(\nu_s^\xi, \nu_t^\xi)$ given by $\xi_{st} = (\phi_s \times \phi_t)_\# \xi$, where we use the map
    \begin{align*}
        \phi_s \times \phi_t: (\overline{\Lambda} \times \overline{\Lambda}) &\rightarrow  (\overline{\Lambda} \times \overline{\Lambda}) \\
        (\lambda, \lambda') &\mapsto (\phi_s(\lambda, \lambda'),\phi_t(\lambda, \lambda')).
    \end{align*}
    By sub-optimality, we have that 
    \[ 
    d^\mathrm{MPD}_{\mathrm{W},p}(\nu_s^\xi, \nu_t^\xi)^p \leq \int_{\overline{\Lambda}^2}  \| \phi_s(\overline{\lambda})-\phi_t(\overline{\lambda}) \|^p_p \:\diff\xi_{st}(\overline{\lambda})  = \int_{\overline{\Lambda}^2}  \| \phi_s-\phi_t \|^p_p \: \diff \xi_{st},
    \]
    where $\overline{\lambda} = (\lambda, \lambda')$. From the equality $$ \|  \phi_s(\overline{\lambda})-\phi_t(\overline{\lambda}) \|^p_p = \| (t-s)\lambda - (t-s)\lambda' \|^p_p $$ we have 

\[
  \int_{\overline{\Lambda}^2}  \| \phi_s-\phi_t \|^p_p \;\; \diff\xi_{st}  = (t-s)^p \int_{\overline{\Lambda}^2}  \| d_p  \|^p_p \: \diff\xi  = (t-s)^p d^\mathrm{MPD}_{\mathrm{W},p}(\nu_0, \nu_1)^p.
\]
Putting everything together, we have  $d^\mathrm{MPD}_{\mathrm{W},p}(\nu_s^\xi, \nu_t^\xi) \leq (t-s) d^\mathrm{MPD}_{\mathrm{W},p}(\nu_0, \nu_1)$, and this completes the proof.
\end{proof}

\begin{remark}\label{rmk:nonuniquegeoPD}
  Consider two measure persistence diagrams $\nu_0, \nu_1$. Take any path between them of the form $\nu_t = (\phi_t)_{\#}\xi$, where $\xi$ is an optimal coupling between $\nu_0, \nu_1$, and for every $(\lambda, \lambda') \in \overline{\Lambda} \times \overline{\Lambda}$, the function $t \mapsto \phi_t(\lambda,\lambda')$ traces out a geodesic between $\lambda$ and $\lambda'$ in $(\overline{\Lambda}, \|\cdot\|_p)$. Then, the same argument as in Proposition~\ref{prop:geopd} shows that $\nu_t$ is a geodesic between $\nu_0, \nu_1$. In particular, this shows that for $p=1$, $d^\mathrm{MPD}_{\mathrm{W},1}$ admits geodesics which are not of the form $\phi_t$.
\end{remark}

\begin{remark}
    This formula for geodesics in the space of measure persistence diagrams takes the same form as the well-known formula for geodesics in classical Wasserstein space---see, e.g.,~\cite[Theorem 7.2.2]{ambrosio2005gradient}. 
\end{remark}

\subsection{Geodesics in the space of measure topological networks}

We now focus on geodesics in $\faktor{\mathcal{P}}{\sim_w}$, using techniques which follow the ones employed by Sturm in the Gromov-Wasserstein setting~\cite{sturm2023space}. Let $P = ((X, k, \mu), (Y, \iota, \nu), \omega)$ and $P' = ((X', k', \mu'), (Y', \iota', \nu'), \omega')$ be topological networks and let $(\pi^v, \pi^e)$ be a pair of optimal couplings that realise the infimum in Definition \ref{def:TpOT}. We construct the topological network 
\begin{equation}\label{eq:TOPgeo}
P_t = \left( (\Tilde{X}, k_t, \pi^v), (\Tilde{Y}, \iota_t, \pi^e),  \omega_t\right),
\end{equation}
where $\Tilde{X} = X \times X'$ and  $\Tilde{Y} =\left( (Y \times Y') \cup (Y \times \partial_{Y'}) \cup (\partial_{Y} \times Y')  \right)$ are as in Proposition~\ref{prop:weakiso} (and we once again simplify notation by using expressions such as $Y \times \partial_{Y'}$ rather than $Y \times \{\partial_{Y'}\}$), and
\begin{align*}
  \omega_t ((x,x'), (y,y')) &= (1-t)\omega(x,y) + t\omega'(x',y'), \\
     k_t ((x_1,x_1'), (x_2, x_2')) &= (1-t)k(x_1,x_2) + tk'(x'_1,x'_2), \\
     \iota_t(y,y') &= (1-t)\iota(y) + t\iota'(y').
\end{align*}
Note that $(\iota_t)_\# \pi^e = \nu_t^{(\iota \times \iota')_\#\pi^e}$ as in (\ref{eqn:induced_measure}).
In the following, we use $[P]$ to denote the equivalence class of $P \in \mathcal{P}$ with respect to $\sim_w$. 

\begin{theorem}\label{thm:geosame}
  For $P,P' \in \mathcal{P}$, the path $\gamma: [0,1] \to \faktor{\mathcal{P}}{\sim_w}$ defined by $\gamma_t = [P_t]$ defines a geodesic between $[P]$ and  $[P']$, with respect to $d_{\TpOT,p}$.
\end{theorem}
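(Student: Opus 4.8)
The plan is to verify the geodesic inequality $d_{\TpOT,p}(\gamma_s, \gamma_t) \le (t-s)\, d_{\TpOT,p}([P],[P'])$ for all $0 \le s \le t \le 1$, since as recalled in the preliminaries the reverse inequality is automatic. Fix optimal couplings $(\pi^v, \pi^e)$ realising $d_{\TpOT,p}(P,P')$, which exist by Proposition~\ref{prop:realised}, and use them to build $P_t$ as in \eqref{eq:TOPgeo}. To estimate $d_{\TpOT,p}(P_s, P_t)$ I would exhibit an explicit feasible pair of couplings between $P_s$ and $P_t$: on the vertex side, take $\sigma^v := (\mathrm{id} \times \mathrm{id})_\# \pi^v$, i.e. the ``diagonal'' coupling on $\Tilde{X} \times \Tilde{X} = (X\times X')\times(X\times X')$ supported on pairs $((x,x'),(x,x'))$; on the edge side, take $\sigma^e := (\mathrm{id} \times \mathrm{id})_\# \pi^e$ analogously on $\overline{\Tilde Y} \times \overline{\Tilde Y}$. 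One checks these have the correct marginals ($\pi^v$ on each $\Tilde X$ factor, $\pi^e$ on each $\overline{\Tilde Y}$ factor) and that $\sigma^e$ is admissible, since $\pi^e$ puts no mass on $(\partial_Y,\partial_{Y'})$ and hence $\sigma^e$ puts no mass on the pair of ``diagonal'' augmented points.

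Next I would plug $(\sigma^v, \sigma^e)$ into the three terms of Definition~\ref{def:TpOT} and compute. For the GW-type term, $|k_s((x_1,x_1'),(x_2,x_2')) - k_t((x_1,x_1'),(x_2,x_2'))| = |t-s|\,|k(x_1,x_2) - k'(x_1',x_2')|$ by the linear-interpolation formula for $k_t$, and integrating against $\sigma^v \otimes \sigma^v$ (which pushes forward to $\pi^v \otimes \pi^v$ on $(X\times X')^2$) gives exactly $|t-s|^p \| k - k' \|_{L^p(\pi^v\otimes\pi^v)}^p$. The co-optimal term behaves identically: $|\omega_s - \omega_t| = |t-s|\,|\omega(x,y) - \omega'(x',y')|$ pointwise, yielding $|t-s|^p\|\omega - \omega'\|_{L^p(\pi^v\otimes\pi^e)}^p$. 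For the partial-transport term, $\iota_s(y,y') - \iota_t(y,y') = (t-s)(\iota(y) - \iota'(y'))$ in $\mathbb{R}^2$ (extended so that $\iota(\partial_Y) = \partial_\Lambda$, $\iota'(\partial_{Y'}) = \partial_\Lambda$, so the difference is $0$ on the diagonal-diagonal part and the distance-to-diagonal bookkeeping works out on the half-augmented parts), so this contributes $|t-s|^p \| d_p\circ(\iota,\iota')\|_{L^p(\pi^e)}^p$. Summing the three and taking the $p$-th root, feasibility of $(\sigma^v,\sigma^e)$ and optimality of $(\pi^v,\pi^e)$ give $d_{\TpOT,p}(P_s, P_t) \le |t-s|\, d_{\TpOT,p}(P,P')$, which descends to the quotient.

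Two bookkeeping points need care and I expect the main (modest) obstacle to be the second. First, one must confirm $(\iota_t)_\#\pi^e$ really is a measure persistence diagram with finite $p$-persistence so that $P_t \in \mathcal{P}$ and distances to it are finite; the paper already notes $(\iota_t)_\#\pi^e = \nu_t^{(\iota\times\iota')_\#\pi^e}$ in the sense of \eqref{eqn:induced_measure}, so this follows from the interpolation being the pushforward under $\phi_t$ and the finiteness assumptions on $\iota_\#\nu, \iota'_\#\nu'$ together with Proposition~\ref{prop:geopd}'s setup. Second, and more delicately, the augmented edge space $\Tilde Y = (Y\times Y')\cup(Y\times\partial_{Y'})\cup(\partial_Y\times Y')$ together with its augmentation $\overline{\Tilde Y} = \Tilde Y \cup (\partial_Y\times\partial_{Y'})$ must be handled so that $\iota_t$ extends consistently (sending $\partial_Y\times\partial_{Y'}$ to $\partial_\Lambda$, and a half-diagonal point $(y,\partial_{Y'})$ to $(1-t)\iota(y) + t\,\partial_\Lambda$ — which, under the convention identifying $\partial_\Lambda$ with the diagonal and using the metric projection, must be interpreted via $\Proj_{\partial_\Lambda}$), and so that the admissibility constraint $\sigma^e(\partial_{\Tilde Y},\partial_{\Tilde Y}) = 0$ makes sense and the cost contributions from the half-augmented parts match the $\|\,\cdot - \Proj_{\partial_\Lambda}(\cdot)\|_p$ terms in $d_{\mathrm{W},p}^{\MPD}$. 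This is exactly the same augmented-space gluing carried out in \cite{divol2021understanding} and in the triangle-inequality proof of Proposition~\ref{prop:psemet}, so I would import that machinery rather than re-derive it, and the computation above then goes through verbatim.
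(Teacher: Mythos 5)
Your proposal is correct and follows essentially the same route as the paper: the paper also takes the diagonal (identity) couplings of $\pi^v$ and $\pi^e$ with themselves (denoted $\ones_{\pi^v}$ and $\ones_{\pi^e}$ there), factors out $(t-s)^p$ from each of the three terms via the linear-interpolation formulas for $k_t$, $\omega_t$, $\iota_t$, and concludes by sub-optimality together with the standard fact that the one-sided inequality suffices. The augmented-space bookkeeping you flag is indeed handled by importing the conventions of Divol--Lacombe exactly as you propose.
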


\begin{proof}
Throughout the proof, let $\pi^v \in  \Pi(\mu, \mu')$ and $\pi^e \in \Pi_{\rm adm}(\nu, \nu')$ be optimal couplings for $P, P'$. We fix $0 \leq s \leq t \leq1$, and we use the notation $\Tilde{y} = (y,y') \in \Tilde{Y}$, $\Tilde{x} = (x,x') \in \Tilde{X}$, and $\partial = (\partial_Y,\partial_{Y'})$. 
Consider now any pair of couplings $\pi, \xi$ for $P_s, P_t$. By sub-optimality, we have 
\begin{align}
  \begin{split}
    d_{\TpOT, p}(P_s, P_t)^p &\leq \int_{\Tilde{X}^2 \times \Tilde{Y}^2 } | \omega_s( \Tilde{x}_1, \Tilde{y}_1 ) -\omega_t(\Tilde{x}_2, \Tilde{y}_2) |^p \: \diff \pi(\Tilde{x}_1, \Tilde{x}_2) \: \diff \xi(\Tilde{y}_1,\Tilde{y}_2)) \\
                             &\qquad \qquad +  \int_{\Tilde{X}^4} | k_s(\Tilde{x}_1, \Tilde{x}_2) -k_t(\Tilde{x}_3, \Tilde{x_4}) |^p \: \diff\pi(\Tilde{x}_1, \Tilde{x}_3) \: \diff\pi(\Tilde{x}_2, \Tilde{x}_4) \\
                             &\qquad \qquad  +  \int_{ (\Tilde{Y} \cup \partial) ^2}  \| \iota_s(\Tilde{y}_1)-\iota_t(\Tilde{y}_2) \|^p_p \: \diff \xi(\Tilde{y}_1,\Tilde{y}_2).
  \end{split} \label{eqn:TpOT_geodesic}
\end{align}

Let $\ones_{\pi^v}$ denote the identity coupling of $\pi^v$ to itself, that is, 
$\ones_{\pi^v} = (\mathrm{diag}_{\Tilde{X}})_\#(\pi^v)$, where $\mathrm{diag}_{\Tilde{X}}$ is the map 
\begin{equation*}
  \mathrm{diag}_{\Tilde{X}}: \Tilde{X}\rightarrow \Tilde{X} \times \Tilde{X}, \quad  \Tilde{x} \mapsto (\Tilde{x},\Tilde{x}).
\end{equation*}
Similarly, set $\ones_{\pi^e} = (\mathrm{diag}_{\Tilde{Y}})_\#(\pi^e)$, where $\mathrm{diag}_{\Tilde{Y}}$ is defined analogously. Note that we have that $\xi_{st} \coloneqq  (\iota_s \times \iota_t)_\# \ones_{\pi^e} =  (\phi_s \times \phi_t)_\# \pi^e $ as in the proof of Proposition~\ref{prop:geopd}.

By the same argument as in the proof of Proposition~\ref{prop:geopd}, we have that 
\begin{align*}
    \int_{(\Tilde{Y} \cup \partial)^2}  \| \iota_s(\Tilde{y})-\iota_t(\Tilde{y}) \|^p_p \: \diff \ones_{\pi^e}(\Tilde{y}, \Tilde{y}) &= (t-s)^p \int_{\overline{Y} \times \overline{Y'}}  \| \iota(y) - \iota'(y') \|^p_p \: \diff \pi^e(y , y'). 
\end{align*}
Similarly, from the definition of $k_s$ and $k_t$, we have 
\[
  | k_s(\Tilde{x}_1, \Tilde{x}_2) - k_t(\Tilde{x}_1, \Tilde{x}_2) |^p = | (t-s) k(x_1, x_2) - (t-s) k'(x'_1, x'_2) |^p,
\]
and it follows that
\begin{align*}
  &\int_{\Tilde{X} ^4} | k_s(\Tilde{x}_1, \Tilde{x}_2) - k_t(\Tilde{x}_1, \Tilde{x}_2) |^p \: \diff \ones_{\pi^v}(\Tilde{x}_1, \Tilde{x}_1) \: \diff \ones_{\pi^v}(\Tilde{x}_2,\Tilde{x}_2) \\
  &\hspace{1in} = (t-s)^p \int_{\Tilde{X} ^4} | k(x_1, x_2) - k'(x'_1, x'_2)  |^p \: \diff\ones_{\pi^v}(\Tilde{x}_1, \Tilde{x}_1) \: \diff\ones_{\pi^v}(\Tilde{x}_2, \Tilde{x}_2)  \\
  &\hspace{1in} = (t-s)^p \int_{\Tilde{X} ^2} | k(x_1,x_2) - k'(x_1',x_2') |^p \: \diff\pi^v(x_1, x_1')\:\diff\pi^v(x_2, x_2'),
\end{align*}
where we have applied the definition of $\ones_{\pi^v}$ in the last line. Applying the same arguments to the remaining term in \eqref{eqn:TpOT_geodesic},  we have
\[
  | \omega_s( \Tilde{x}, \Tilde{y} ) - \omega_t( \Tilde{x}, \Tilde{y})|^p = | (t-s) \omega(x, y) - (t-s)\omega'(x', y')) |^p
\]
and
\begin{align*}
    &\int_{\Tilde{X} ^2 \times \Tilde{Y}^2 } | \omega_s( \Tilde{x}, \Tilde{y} ) - \omega_t(\Tilde{x}, \Tilde{y}) |^p \: \diff \ones_{\pi^v}(\Tilde{x}, \Tilde{x}) \: \diff\ones_{\pi^e}(\Tilde{y}, \Tilde{y}) \\
    &\hspace{1in} = (t-s)^p \int_{\Tilde{X} ^2 \times \Tilde{Y}^2 } | \omega( x, y ) - \omega'(x', y') |^p \: \diff\ones_{\pi^v}(\Tilde{x}, \overline{x}) \: \diff\ones_{\pi^e}(\Tilde{y}, \Tilde{y}) \\
    &\hspace{1in} = (t-s)^p \int_{\Tilde{X} \times (\overline{Y} \times \overline{Y'}) } | \omega( x, y ) - \omega'(x', y') |^p \: \diff\pi^v(x, x') \: \diff\pi^e(y, y').
\end{align*}
Putting these together, we have that 
\begin{align*}
  d_{\TpOT, p}(P_s, P_t)^p &\leq (t-s)^p \left(  \int_{\Tilde{X} \times (\overline{Y} \times \overline{Y'}) } | \omega( x, y ) - \omega'(x', y') |^p \: \diff\pi^v(x, x') \: \diff\pi^e(y, y') \right. \\
                           & \hspace{.5in} + \int_{\Tilde{X} ^2} | k(x_1,x_2) - k'(x_1',x_2') |^p \: \diff\pi^v(x_1, x_1') \: \diff\pi^v(x_2, x_2') \\
                           & \left. \hspace{.5in} + \int_{\overline{Y} \times \overline{Y'}}  \| \iota(y) - \iota'(y') \|^p_p \: \diff \pi^e(y , y') \right) \\
                           &   = \left( (t-s) d_{\TpOT, p}(P_s, P_t) \right)^p,
                           \end{align*}
and the result follows.

\end{proof}

A geodesic of the form described in Equation~\ref{eq:TOPgeo} will be called a \textit{convex geodesic}. A natural question is whether all geodesics in $\faktor{\mathcal{P}}{\sim_w}$ are convex. We will show in Theorem \ref{thm:geounique} that this is the case for $p=2$. As a first observation, the proof of Theorem \ref{thm:geosame} easily generalises to give the following corollary. This corollary, in particular, shows that if $p=1$ then there are geodesics which are not convex (c.f.~Remark \ref{rmk:nonuniquegeoPD}). 

\begin{corollary}\label{geo:unique}
Consider the path $Q^t$ between topological networks $Q^0$ and $Q^1 $ given by  $$Q^t =  \left( (\Tilde{X}, k_t, \pi^v), (\Tilde{Y}, \iota_t, \pi^e),  \omega_{t}  \right),$$ where $k_t, \omega_{t}, \pi^v,\pi^e , \Tilde{X},\Tilde{Y}$ are as in Proposition~\ref{thm:geosame}, and $\iota_t$ is such that the map $$ t \mapsto \iota_t((y^0,y^1)) $$ defines a geodesic between $\iota^0(y)$ and $\iota^1(y^1)$ in $(\overline{\Lambda}, \| \cdot \|_p)$ as $t$ varies in $[0,1]$ (as in Remark~\ref{rmk:nonuniquegeoPD}) for $\pi^e$-almost every $\Tilde{y} =(y^0,y^1) \in \Tilde{Y} $. Then $Q^t$ defines a geodesic $[Q^t]$ in $\faktor{\mathcal{P}}{\sim_w}$.

\end{corollary}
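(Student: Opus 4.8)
The plan is to observe that Corollary~\ref{geo:unique} is essentially a restatement of Theorem~\ref{thm:geosame} in which the specific interpolation $\iota_t(y,y') = (1-t)\iota(y) + t\iota'(y')$ is replaced by any family of maps $\iota_t$ whose fibrewise behaviour traces out geodesics in $(\overline{\Lambda}, \|\cdot\|_p)$. So I would simply re-run the proof of Theorem~\ref{thm:geosame}, pointing out that the only place where the \emph{linear} form of $\iota_t$ was used is in the estimate of the partial-transport term $\int \|\iota_s(\tilde y) - \iota_t(\tilde y)\|_p^p \, \diff\ones_{\pi^e}$, and that this estimate survives verbatim under the weaker hypothesis by invoking Remark~\ref{rmk:nonuniquegeoPD} in place of Proposition~\ref{prop:geopd}.

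Concretely, first I would fix $0 \le s \le t \le 1$ and, exactly as in the proof of Theorem~\ref{thm:geosame}, use the identity couplings $\ones_{\pi^v} = (\mathrm{diag}_{\tilde X})_\#(\pi^v)$ and $\ones_{\pi^e} = (\mathrm{diag}_{\tilde Y})_\#(\pi^e)$ as sub-optimal competitors in the TpOT problem between $Q^s$ and $Q^t$. The two terms involving $k_t$ and $\omega_t$ are unchanged: since $k_t$ and $\omega_t$ are still the linear interpolations from \eqref{eq:TOPgeo}, the computations giving $|k_s - k_t|^p = (t-s)^p|k - k'|^p$ and $|\omega_s - \omega_t|^p = (t-s)^p|\omega - \omega'|^p$ go through word for word. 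For the $\iota$-term, the hypothesis that $t \mapsto \iota_t(y^0,y^1)$ is a geodesic in $(\overline\Lambda, \|\cdot\|_p)$ gives, for $\pi^e$-a.e.\ $\tilde y$,
\[
  \|\iota_s(\tilde y) - \iota_t(\tilde y)\|_p = (t-s)\,\|\iota_0(\tilde y) - \iota_1(\tilde y)\|_p = (t-s)\,\|\iota(y^0) - \iota'(y^1)\|_p,
\]
so integrating against $\ones_{\pi^e}$ yields the same bound $(t-s)^p \int_{\overline Y \times \overline{Y'}} \|\iota(y) - \iota'(y')\|_p^p \, \diff\pi^e$ as before. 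Summing the three contributions and recognising the right-hand side as $\big((t-s)\,d_{\TpOT,p}(Q^0,Q^1)\big)^p$ (using that $(\pi^v, \pi^e)$ is optimal for $Q^0, Q^1$, by the same argument showing they are optimal in Theorem~\ref{thm:geosame}), I obtain $d_{\TpOT,p}([Q^s],[Q^t]) \le (t-s)\,d_{\TpOT,p}([Q^0],[Q^1])$, and the reverse inequality is automatic, so $[Q^t]$ is a geodesic.

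There is essentially no hard part here; the corollary is deliberately a soft generalisation. The only point requiring a modicum of care is the measurability and admissibility of the competitor couplings $(\iota_s \times \iota_t)_\# \ones_{\pi^e}$ and the verification that one may choose the family $\{\iota_t\}$ measurably in $\tilde y$ (so that the pushforwards are well-defined Radon measures) — but for $p \neq 1$ the geodesic in $(\overline\Lambda,\|\cdot\|_p)$ is unique and equals the linear one, so the statement only has content for $p=1$, where an explicit measurable selection (e.g.\ the one implicit in Remark~\ref{rmk:nonuniquegeoPD}) can be used. Finally I would note the consequence already flagged in the statement: taking $p=1$ and a non-linear fibrewise geodesic produces a geodesic in $\faktor{\mathcal P}{\sim_w}$ that is not convex in the sense of \eqref{eq:TOPgeo}, so convexity of all geodesics genuinely requires the $p=2$ hypothesis of Theorem~\ref{thm:geounique}.
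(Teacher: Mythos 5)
Your proposal is correct and follows exactly the route the paper intends: the paper gives no separate proof of Corollary~\ref{geo:unique}, stating only that the proof of Theorem~\ref{thm:geosame} ``easily generalises,'' and your argument is precisely that generalisation --- the $k$- and $\omega$-terms are untouched, and the fibrewise geodesic hypothesis substitutes for linearity of $\iota_t$ in the single estimate $\|\iota_s(\tilde y)-\iota_t(\tilde y)\|_p=(t-s)\|\iota_0(\tilde y)-\iota_1(\tilde y)\|_p$. Your closing observations on measurable selection and on the $p=1$ case being where the statement has genuine content are accurate and consistent with Remark~\ref{rmk:nonuniquegeoPD}.
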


While some of the results below extend to $p > 1$, from now on, we focus on the case where $p = 2$, and write $d_{\TpOT}$ in place of $d_{\TpOT, p}$. This is for the sake of simplifying notation, and because we will use $p=2$ in computational implementations and examples below.   

\begin{theorem}\label{thm:geounique}
  All geodesics in $\left(\faktor{\mathcal{P}}{\sim_w}, d_{\TpOT}\right)$ are convex.
\end{theorem}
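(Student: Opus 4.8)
Fix a geodesic $\gamma\colon[0,1]\to\faktor{\mathcal{P}}{\sim_w}$ joining $[P]$ to $[P']$ and, for every $t$, a representative $P_t=\big((X_t,k_t,\mu_t),(Y_t,\iota_t,\nu_t),\omega_t\big)$ of $\gamma_t$ with $P_0=P$ and $P_1=P'$. Let $\mathcal{O}\subseteq\Pi(\mu_0,\mu_1)\times\Pi_{\rm adm}(\nu_0,\nu_1)$ be the set of pairs realising $d_{\TpOT}(P,P')$, and for $(\pi^v,\pi^e)\in\mathcal{O}$ let $\sigma^{(\pi^v,\pi^e)}$ be the convex geodesic built from it via \eqref{eq:TOPgeo}. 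For $t\in(0,1)$ put $\mathcal{O}_t:=\{(\pi^v,\pi^e)\in\mathcal{O}:\ \gamma_t=[\sigma^{(\pi^v,\pi^e)}_t]\}$. Rather than iterating the midpoint construction---which runs into a coherence problem, since the optimal coupling produced at one step need not be compatible with the next---the plan is to prove directly that $\bigcap_{t\in(0,1)}\mathcal{O}_t\neq\varnothing$; any element of this intersection realises $\gamma$ as a convex geodesic. Since $\Pi(\mu_0,\mu_1)\times\Pi_{\rm adm}(\nu_0,\nu_1)$ is sequentially compact for the weak$\,\times\,$vague topology (proof of Proposition~\ref{prop:realised}), it suffices to establish (i) each $\mathcal{O}_t$ is closed, and (ii) the family $\{\mathcal{O}_t\}_{t\in(0,1)}$ has the finite intersection property.

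\noindent\textbf{Finite intersection property.} Fix $0=t_0<t_1<\dots<t_m<t_{m+1}=1$. First I would choose, for each $j$, optimal couplings $\alpha^v_j\in\Pi(\mu_{t_j},\mu_{t_{j+1}})$ and $\alpha^e_j\in\Pi_{\rm adm}(\nu_{t_j},\nu_{t_{j+1}})$ realising $d_{\TpOT}(P_{t_j},P_{t_{j+1}})$; glue the $\alpha^v_j$ over the shared factors $X_{t_1},\dots,X_{t_m}$ (Lemma~\ref{lem:gluing}) into a measure $\Xi^v$ on $X_0\times X_{t_1}\times\cdots\times X_{t_m}\times X_1$, glue the $\alpha^e_j$ by the Radon/admissible version of gluing used in the proof of Proposition~\ref{prop:psemet} (with the usual care at the augmented diagonal points, as in~\cite[Proposition~3.2]{divol2021understanding}) into $\Xi^e$, and set $\pi^v=(p_{0,1})_\#\Xi^v$, $\pi^e=(p_{0,1})_\#\Xi^e$. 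Running the chain of inequalities of the proof of Proposition~\ref{prop:psemet}, now with the intermediate networks $P_{t_1},\dots,P_{t_m}$ interpolated in place of a single one and using the geodesic identity $d_{\TpOT}(P_{t_j},P_{t_{j+1}})=(t_{j+1}-t_j)\,d_{\TpOT}(P,P')$ (so the pieces sum to $d_{\TpOT}(P,P')$), forces every inequality in the chain to be an equality. Now I would extract the equality cases: equality in the $\ell^2$-triangle inequality on $\mathbb{R}^3$ forces the $m+1$ vectors of component $L^2$-costs to be pairwise positively proportional, and since the $j$-th has $\ell^2$-norm $(t_{j+1}-t_j)\,d_{\TpOT}(P,P')$ it equals $(t_{j+1}-t_j)$ times one fixed vector; feeding this back, equality in Minkowski's inequality in $L^2$---which, by strict convexity of $L^2$, forces the relevant nonnegative integrands to be mutually proportional almost everywhere---together with equality in the pointwise triangle inequalities, in $\mathbb{R}$ for $k$ and $\omega$ and in $(\overline{\Lambda},\|\cdot\|_2)$ for $\iota$, pins down the proportionality constants and yields, $\Xi^v\otimes\Xi^v$- resp.\ $\Xi^v\otimes\Xi^e$- resp.\ $\Xi^e$-almost everywhere and for every $j$,
\[
k_{t_j}=(1-t_j)k_0+t_jk_1,\qquad \omega_{t_j}=(1-t_j)\omega_0+t_j\omega_1,\qquad \iota_{t_j}=\phi_{t_j}\circ(\iota_0,\iota_1),
\]
where $\phi_t$ is the interpolation map of Proposition~\ref{prop:geopd}; here $p=2$ is essential, because $(\overline{\Lambda},\|\cdot\|_2)$ is uniquely geodesic (straight segments in $\Lambda$, and the metric-projection segment towards $\partial_\Lambda$), so $\iota_{t_j}$ is forced to sit at parameter exactly $t_j$ rather than merely somewhere on a geodesic; the degenerate cases where one of the $L^2$-norms vanishes are handled directly. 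One then checks, in the spirit of the proof of Proposition~\ref{prop:weakiso}, that $\pi^v,\pi^e$ are themselves optimal for $d_{\TpOT}(P,P')$ (they already achieve equality at the first, sub-optimality, step of the chain), so $(\pi^v,\pi^e)\in\mathcal{O}$, and that the coordinate projections $p_{t_j}$ and $p_{0,1}$, viewed on the glued network carrying $\Xi^v,\Xi^e$ with structure functions pulled back from the $t_j$-th factors, are weak isomorphisms---onto $P_{t_j}$ and onto $\sigma^{(\pi^v,\pi^e)}_{t_j}$ respectively---precisely because the two candidate structure functions (pullback from the $t_j$-th factor versus the convex combination of pullbacks from the $0$th and $1$st) agree $\Xi^v$- resp.\ $\Xi^e$-almost everywhere by the identities above. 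Hence $\gamma_{t_j}=[P_{t_j}]=[\sigma^{(\pi^v,\pi^e)}_{t_j}]$ for all $j$, i.e.\ $(\pi^v,\pi^e)\in\bigcap_j\mathcal{O}_{t_j}$, which is (ii).

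\noindent\textbf{Closedness of $\mathcal{O}_t$.} That $\mathcal{O}$ is closed follows from lower semicontinuity of the TpOT cost (Proposition~\ref{prop:realised}). For the sharper claim, observe that all the networks $\sigma^{(\pi^v,\pi^e)}_t$, $(\pi^v,\pi^e)\in\mathcal{O}$, share the \emph{same} underlying pair of spaces and the \emph{same} structure functions $(1-t)k_0+tk_1$, $(1-t)\omega_0+t\omega_1$, $\phi_t\circ(\iota_0,\iota_1)$, differing only through the measures $\pi^v,\pi^e$. Hence $(\pi^v,\pi^e)\mapsto d_{\TpOT}\big(\sigma^{(\pi^v,\pi^e)}_t,\gamma_t\big)$ is lower semicontinuous: along a convergent sequence of couplings one extracts, by tightness of the coupling spaces and lower semicontinuity of the distortion functionals exactly as in Proposition~\ref{prop:realised}, a limiting transport plan between $\sigma^{(\pi^v,\pi^e)}_t$ and $\gamma_t$ whose cost is at most the $\liminf$. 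Consequently $\mathcal{O}_t=\mathcal{O}\cap\{(\pi^v,\pi^e):\ d_{\TpOT}(\sigma^{(\pi^v,\pi^e)}_t,\gamma_t)\le 0\}$ is closed, which is (i). By (i), (ii) and sequential compactness, $\bigcap_{t\in(0,1)}\mathcal{O}_t\neq\varnothing$, and the theorem follows.

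\noindent\textbf{Expected main obstacle.} The delicate point is the equality-case analysis of the second step, for two reasons. On the technical side one must carry the iterated gluing through the augmented spaces $\overline{Y_{t_j}}$, controlling the mass sitting over intermediate diagonal points so that $\pi^e$ is a genuine admissible coupling and the coordinate projections are bona fide weak isomorphisms---the same bookkeeping already present in the proofs of Proposition~\ref{prop:psemet} and in~\cite{divol2021understanding}, now iterated $m$ times. On the conceptual side it is exactly here that $p=2$ enters essentially: strict convexity of $L^2$ together with unique geodesy of $(\overline{\Lambda},\|\cdot\|_2)$ is what upgrades ``$\iota_{t_j}$ lies on \emph{some} geodesic between $\iota_0$ and $\iota_1$'' to ``$\iota_{t_j}=\phi_{t_j}\circ(\iota_0,\iota_1)$ with parameter exactly $t_j$'', and likewise for $k$ and $\omega$; for $p=1$ this inference fails, which is the source of the non-convex geodesics that Corollary~\ref{geo:unique} exhibits for $p=1$.
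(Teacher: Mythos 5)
Your proposal is correct, and its computational core is the same as the paper's: glue optimal couplings along a finite partition of $[0,1]$, project to a coupling between the endpoints, use the geodesic identity to force every inequality in the resulting triangle-inequality chain to be an equality, and read off from the equality cases that the intermediate structure functions $k_t,\omega_t,\iota_t$ must be the convex interpolants. Two points of divergence are worth recording. First, the mechanism for extracting the equality cases differs: you invoke strict convexity of $L^2$ and the equality case of Minkowski's inequality (plus unique geodesy of $(\overline{\Lambda},\|\cdot\|_2)$), whereas the paper substitutes the identity $|ta+(1-t)b|^2=t|a|^2+(1-t)|b|^2-t(1-t)|a-b|^2$ together with Jensen's inequality, which isolates an explicitly non-positive remainder of the form $-\tfrac{1}{t(1-t)}\|(1-t)(\omega^0-\omega^t)-t(\omega^t-\omega^1)\|^2$ that is then forced to vanish; the two routes deliver the same conclusion, the paper's being slightly more self-contained since it never needs to treat degenerate (zero-norm) cases separately. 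Second, and more substantively, your global packaging is different: the paper works only with dyadic partitions and concludes by density of dyadic times and continuity of $t\mapsto Q^t$, which leaves implicit exactly the ``coherence'' issue you identify --- the optimal coupling produced at dyadic refinement level $n$ could a priori generate a different convex geodesic from the one produced at level $n+1$. Your closedness-plus-finite-intersection-property argument over the sequentially compact coupling space resolves this cleanly and produces a single optimal pair $(\pi^v,\pi^e)$ whose convex geodesic agrees with $\gamma$ at every interior time; in this respect your write-up is more careful than the published proof, at the cost of having to verify lower semicontinuity of $(\pi^v,\pi^e)\mapsto d_{\TpOT}\bigl(\sigma^{(\pi^v,\pi^e)}_t,\gamma_t\bigr)$, which does follow from the arguments already given for Proposition~\ref{prop:realised}.
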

\begin{proof}
  The proof follows by adapting techniques from~\cite[Theorem 3.1]{sturm2023space} and~\cite[Theorem~10]{chowdhury2019geodesics}. 
  Let $[Q^t]$ be an arbitrary geodesic, with  $$  Q^t = \left( (X^t, k^t, \mu^t), (Y^t, \iota^t, \nu^t), \omega^t \right)$$
  Pick a dyadic decomposition of the unit interval $t_0 = 0, t_1 = \frac{1}{2^n}, \cdots, t_i = \frac{i}{2^n}, \cdots, t_{2^n} = 1$. For each $i \in \{0, \cdots , 2^n \}$, let $\pi^v_i, \pi^e_i $ be optimal couplings for $Q^{t_i}, Q^{t_{i-1}}$. Consider the gluings 
  \[
    \overline{\pi^v} = \pi^v_0 \boxtimes \pi^v_{\frac{1}{2^n}} \boxtimes \cdots \boxtimes \pi^v_1 \quad \mbox{and} \quad \overline{\pi^e} = \pi^e_0 \boxtimes \pi^e_{\frac{1}{2^n}} \boxtimes \cdots \boxtimes \pi^e_1
  \]
  (see Lemma \ref{lem:gluing} and the ensuing discussion) and the couplings 
  \[
    \pi^v = (p_0 \times p_1)_{\#}\overline{\pi^v} \quad \mbox{and} \quad \pi^e = (p_0 \times p_1)_{\#}\overline{\pi^e},
  \]
  with $p_i: X^0 \times X^{t_1} \times \cdots \times X^1 \rightarrow X^i$ projection on the $i$th factor. Then, by sub-optimality, we have
  \begin{align}
    d_{\rm TpOT}(Q^0, Q^1)^2 & \leq  \| \omega^0 - \omega^1 \|^2_{L^2(\pi^v \otimes \pi^e)} +  \| k^0 - k^1 \|^2_{L^2(\pi^v \otimes \pi^v)} +\| d_2 \circ (\iota^0, \iota^1) \|^2_{L^2(\pi^e )} \nonumber \\
                             &=: A +  B +  C \label{eq:uniquesplit}
  \end{align}
  For any choice of $t \in \{0, 1/2^n, 2/2^n, \cdots , 1 \}$, let 
  \[
    \xi^v_t = (p_0 \times p_1 \times p_t)_{\#}\overline{\pi^v} \quad \mbox{and} \quad \xi^e_t = (p_0 \times p_1 \times p_t)_{\#}\overline{\pi^e}.
  \]
  We now estimate \eqref{eq:uniquesplit} term-by-term. First observe that
  \begin{align}
    A & = \| \omega^0 - \omega^1 \|^2_{L^2(\pi^v \otimes \pi^e)} = \left\| t \left( \frac{1}{t}(\omega^0 - \omega^t) \right) + (1-t) \left( \frac{1}{1-t}(\omega^t - \omega^1) \right) \right\|^2_{L^2(\xi^v_t \otimes \xi^e_t)} \nonumber \\
      &= \frac{1}{t} \| \omega^0 - \omega^t \|^2_{L^2(\xi^v_t \otimes \xi^e_t)} + \frac{1}{1-t}\| \omega^t - \omega^1 \|^2_{L^2(\xi^v_t \otimes \xi^e_t)} \nonumber \\
      &\hspace{1.5in} - \frac{1}{t(1-t)} \| (1-t)(\omega^0 - \omega^t) -t(\omega^t - \omega^1) \|^2_{L^2(\xi^v_t \otimes \xi^e_t)}, \label{eq:uniqueomega}
  \end{align}
  with the last line following by the general identity
  \[
    | ta + (1-t)b |^2 = t |a|^2 + (1-t)|b|^2 - t(1-t)|a-b|^2,
  \]
  applied to $a = \frac{1}{t}(\omega^0 - \omega^t)$ and $b = \frac{1}{1-t}(\omega^t - \omega^1)$, pointwise. Similarly,  
  \begin{align}
    \begin{split}
        B & = \frac{1}{t}\| k^0 - k^t \|^2_{L^2(\xi^v_t \otimes \xi^v_t)} + \frac{1}{1-t}\| k^t - k^1 \|^2_{L^2(\xi^v_t \otimes \xi^v_t)} \\
        &\hspace{1.5in} - \frac{1}{t(1-t)}\| (1-t)(k^0 - k^t) -t(k^t - k^1) \|^2_{L^2(\xi^v_t \otimes \xi^v_t)}
    \end{split}
    \label{eq:uniqued}
  \end{align}
  and 
  \begin{align}
      \begin{split}
        C &=  \frac{1}{t}\| \iota^0 - \iota^t \|^2_{L^2( \xi^e_t)} + \frac{1}{1-t}\| \iota^t - \iota^1 \|^2_{L^2(\xi^e_t)} \\
        &\hspace{1.5in} - \frac{1}{t(1-t)}\| (1-t)(\iota^0 - \iota^t) -t(\iota^t - \iota^1) \|^2_{L^2(\xi^e_t)}.
    \end{split}
    \label{eq:uniquePD}
  \end{align}
  Recalling that $t = k2^{-n}$ for some $k$, the first term in \eqref{eq:uniqueomega} satisfies
  \begin{align}
    \frac{1}{t}\left\|\omega^0 - \omega^t \right\|_{L^2(\xi_t^v \otimes \xi_t^e)}^2 &= 2^n \cdot \frac{1}{k} \left\|\omega^0 - \omega^{k2^{-n}} \right\|_{L^2(\xi_t^v \otimes \xi_t^e)}^2 \\l
    &=  2^n \cdot \frac{1}{k} \left\|\sum_{\ell = 1}^k (\omega^{(\ell-1)2^{-n}} - \omega^{\ell 2^{-n}}) \right\|_{L^2(\xi_t^v \otimes \xi_t^e)}^2 \nonumber \\
    &\leq 2^n \cdot \frac{1}{k} \left(\sum_{\ell = 1}^k \left\|\omega^{(\ell-1)2^{-n}} - \omega^{\ell 2^{-n}}\right\|_{L^2(\xi_t^v \otimes \xi_t^e)}\right)^2  \label{eqn:geodesic_uniqueness_3}\\
    &\leq 2^n \sum_{\ell = 1}^k \left\|\omega^{(\ell-1)2^{-n}} - \omega^{\ell 2^{-n}} \right\|_{L^2(\xi_t^v \otimes \xi_t^e)}^2, \label{eqn:geodesic_uniqueness_4} 
  \end{align}
  where \eqref{eqn:geodesic_uniqueness_3} follows by the triangle inequality for the $L^2$-norm and \eqref{eqn:geodesic_uniqueness_4} is Jensen's inequality. Applying the same argument to the second term of \eqref{eq:uniqueomega}, as well as to \eqref{eq:uniqued} and \eqref{eq:uniquePD}, it follows that 
  \begin{align}
    \begin{split}\label{eq:unique1}
    A & \leq 2^n \sum_{j =1}^{2^n} \| \omega^{(j-1)2^{-n}} - \omega^{j2^{-n}} \|_{L^2(\pi^v_j \otimes \pi^e_j)}  \\
    &\hspace{1.5in} -  \frac{1}{t(1-t)}\| (1-t)(\omega^0 - \omega^t) -t(\omega^t - \omega^1) \|^2_{L^2(\xi^v_t \otimes \xi^e_t)}, 
    \end{split} \\
    \begin{split}\label{eq:unique2} 
    B& \leq 2^n \sum_{j =1}^{2^n} \| k^{(j-1)2^{-n}} - k^{j2^{-n}} \|_{L^2(\pi^v_j \otimes \pi^v_j)} \\
    &\hspace{1.5in} - \frac{1}{t(1-t)}\| (1-t)(k^0 - k^t) -t(k^t - k^1) \|^2_{L^2(\xi^v_t \otimes \xi^v_t)}, 
    \end{split} \\
    \begin{split}\label{eq:unique3}
    C& \leq 2^n \sum_{j =1}^{2^n} \| \iota^{(j-1)2^{-n}} - \iota^{j2^{-n}} \|_{L^2(\pi^e_j )} \\
    &\hspace{1.5in} - \frac{1}{t(1-t)}\| (1-t)(\iota^0 - \iota^t) -t(\iota^t - \iota^1) \|^2_{L^2(\xi^e_t)}.
    \end{split}
  \end{align}
  From \eqref{eq:unique1}, \eqref{eq:unique2}, \eqref{eq:unique3}, we deduce that
  \begin{align}
    &d_{\rm TpOT}(Q^0, Q^1)^2  \nonumber \\
    &\leq A + B + C \nonumber \\
                              \begin{split}
                                &\leq \sum_{j =1}^{2^n} \left( \| \omega^{(j-1)2^{-n}} - \omega^{j2^{-n}} \|_{L^2(\pi^v_j \otimes \pi^e_j)} + \| k^{(j-1)2^{-n}} - k^{j2^{-n}} \|_{L^2(\pi^v_j \otimes \pi^v_j)} \right.+ \\
                                &\hspace{3.5in}\| \left. \iota^{(j-1)2^{-n}} - \iota^{j2^{-n}} \|_{L^2(\pi^e_j )} \right)  \\ 
                              & \hspace{.25in} - \frac{1}{t(1-t)} \left( \| (1-t)(\omega^0 - \omega^t) -t(\omega^t - \omega^1) \|^2_{L^2(\xi^v_t \otimes \xi^e_t)} \right. \\
                              &\hspace{1.25in} \left.+ \| (1-t)(\omega^0 - \omega^t) -t(\omega^t - \omega^1) \|^2_{L^2(\xi^v_t \otimes \xi^e_t)} \right. \\
                              &\left. \hspace{2.25in} + \| (1-t)(\iota^0 - \iota^t) -t(\iota^t - \iota^1) \|^2_{L^2(\xi^e_t)}\right) 
                              \end{split} \nonumber \\ 
                              \begin{split}\label{eq:uniquefinal} 
                             &= d_{\rm TpOT}(Q^0, Q^1)^2  \\ 
                              & \hspace{.25in} - \frac{1}{t(1-t)} \left( \| (1-t)(\omega^0 - \omega^t) -t(\omega^t - \omega^1) \|^2_{L^2(\xi^v_t \otimes \xi^e_t)} \right. \\
                              &\hspace{1.25in} \left.+ \| (1-t)(\omega^0 - \omega^t) -t(\omega^t - \omega^1) \|^2_{L^2(\xi^v_t \otimes \xi^e_t)} \right. \\
                              &\left. \hspace{2.25in} + \| (1-t)(\iota^0 - \iota^t) -t(\iota^t - \iota^1) \|^2_{L^2(\xi^e_t)}\right),
                              \end{split}
  \end{align}
  where \eqref{eq:uniquefinal} follows by the assumption that $[Q^t]$ is a geodesic and by the optimality of $\pi^e_t, \pi^v_t$. This inequality implies that $\pi_v, \pi_e$ are optimal for $Q^0, Q^1$, and that 
  \begin{align*}
    &\| (1-t)(\omega^0 - \omega^t) -t(\omega^t - \omega^1) \|^2_{L^2(\xi^v_t \otimes \xi^e_t)} \\
    & \hspace{1in} + \| (1-t)(\omega^0 - \omega^t) -t(\omega^t - \omega^1) \|^2_{L^2(\xi^v_t \otimes \xi^e_t)} \\
    & \hspace{2in} + \| (1-t)(\iota^0 - \iota^t) -t(\iota^t - \iota^1) \|^2_{L^2(\xi^e_t)}   =0 
  \end{align*}
  for every dyadic number $t$. This implies that $d_{\rm TpOT}(Q^t, P_t) = 0$ for every dyadic number, with $P_t$ as in Theorem~\ref{thm:geosame}. Continuity of the function $t \mapsto Q^t$ and density of dyadic numbers in $[0,1]$ complete the proof. 
\end{proof}

From the characterisation of geodesics provided above, we easily deduce curvature bounds for $\faktor{\mathcal{P}}{\sim_w}$.

\begin{theorem}\label{thm:nonnegative_curvature}
  The metric space $\left( \faktor{\mathcal{P}}{\sim_w},d_\TpOT \right)$ has curvature bounded below by zero.
\end{theorem}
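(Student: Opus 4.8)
The strategy is to exploit the explicit description of geodesics obtained in Theorems~\ref{thm:geosame} and~\ref{thm:geounique}, together with the elementary identity
\[
  |(1-t)a + tb - c|^2 = (1-t)\,|a-c|^2 + t\,|b-c|^2 - t(1-t)\,|a-b|^2,
\]
valid for scalars and, with $|\cdot|$ the Euclidean norm, for vectors in $\R^2$. This is the same mechanism Sturm uses for Gromov--Wasserstein space~\cite[Section 4.2]{sturm2023space}. Since $\left(\faktor{\mathcal P}{\sim_w}, d_{\TpOT}\right)$ is geodesic by Theorem~\ref{thm:geosame}, and by Theorem~\ref{thm:geounique} every geodesic is convex, it suffices to verify the curvature inequality for a convex geodesic. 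Choosing representatives, write it as $\gamma_t = [P_t]$ with $P_t = ((\tilde X, k_t, \pi^v), (\tilde Y, \iota_t, \pi^e), \omega_t)$ as in Theorem~\ref{thm:geosame}, where $P := P_0$, $P' := P_1$, and $(\pi^v, \pi^e)$ are optimal couplings realising $d_{\TpOT}(P, P')$; fix also an arbitrary third network $R = ((Z, \kappa, \rho), (W, j, \sigma), \tau)$. As every quantity below is $\sim_w$-invariant, the goal is
\[
  d_{\TpOT}(P_t, R)^2 \ \geq\ (1-t)\,d_{\TpOT}(P, R)^2 + t\,d_{\TpOT}(P', R)^2 - t(1-t)\,d_{\TpOT}(P, P')^2 .
\]

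By Proposition~\ref{prop:realised}, pick optimal couplings $(\sigma^v, \sigma^e) \in \Pi(\pi^v, \rho) \times \Pi_{\adm}(\pi^e, \sigma)$ for $d_{\TpOT}(P_t, R)$; here $\sigma^v$ lives on $(X\times X')\times Z$ and $\sigma^e$ on $\overline{\tilde Y}\times\overline W$. Pushing $\sigma^v$ forward along the coordinate projections $((x,x'),z)\mapsto(x,z)$ and $((x,x'),z)\mapsto(x',z)$ yields couplings $\sigma^v_0 \in \Pi(\mu,\rho)$ and $\sigma^v_1 \in \Pi(\mu',\rho)$; likewise, pushing $\sigma^e$ forward along $(\tilde y,w)\mapsto(y,w)$ and $(\tilde y,w)\mapsto(y',w)$ — sending $\partial_{\tilde Y}$ and the mixed points to the appropriate virtual points, following the conventions of Proposition~\ref{prop:geopd} — yields $\sigma^e_0 \in \Pi_{\adm}(\nu,\sigma)$ and $\sigma^e_1 \in \Pi_{\adm}(\nu',\sigma)$.

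Next, apply the displayed identity pointwise to the integrands of the three terms $\|\omega_t - \tau\|^2_{L^2(\sigma^v\otimes\sigma^e)}$, $\|k_t - \kappa\|^2_{L^2(\sigma^v\otimes\sigma^v)}$ and $\|d_2\circ(\iota_t,j)\|^2_{L^2(\sigma^e)}$ that sum to $d_{\TpOT}(P_t,R)^2$, using $\omega_t = (1-t)\omega + t\omega'$, $k_t = (1-t)k + tk'$ and $\iota_t = (1-t)\iota + t\iota'$. After integrating, the functions $\omega, k, \iota$ depend only on the unprimed coordinates and $\omega', k', \iota'$ only on the primed ones, so the ``$(1-t)|a-c|^2$'' contributions reassemble into the TpOT cost of $(P,R)$ evaluated at $(\sigma^v_0,\sigma^e_0)$, the ``$t|b-c|^2$'' contributions into that of $(P',R)$ at $(\sigma^v_1,\sigma^e_1)$, and the ``$t(1-t)|a-b|^2$'' contributions — involving no $Z$- or $W$-variable — push forward to the TpOT cost of $(P,P')$ at $(\pi^v,\pi^e)$ (the first marginals of $\sigma^v$ and $\sigma^e$ being $\pi^v$ and $\pi^e$). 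Hence
\[
  d_{\TpOT}(P_t, R)^2 = (1-t)\,\mathcal C_0 + t\,\mathcal C_1 - t(1-t)\,d_{\TpOT}(P, P')^2,
\]
with $\mathcal C_0 \geq d_{\TpOT}(P,R)^2$ and $\mathcal C_1 \geq d_{\TpOT}(P',R)^2$ by suboptimality of $(\sigma^v_i,\sigma^e_i)$, and with the cross term exactly $d_{\TpOT}(P,P')^2$ by optimality of $(\pi^v,\pi^e)$. Since $1-t,t\geq 0$, the curvature inequality follows.

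The only delicate point is the bookkeeping around augmented spaces: one must verify that $\sigma^v_0,\sigma^v_1$ really carry the claimed marginals, that $\sigma^e_0,\sigma^e_1$ are admissible (the constraint $\sigma^e_i(\partial,\partial)=0$ and the one-sided marginal conditions must survive the projection $\overline{\tilde Y}\to\overline Y$), and that the linear interpolation $\iota_t$ and its coordinate projections interact correctly with $\partial_\Lambda$ — precisely the conventions already in force in Propositions~\ref{prop:geopd} and~\ref{prop:weakiso}. Granting these, the remainder is just the parallelogram identity together with suboptimality/optimality, so I expect no further difficulty.
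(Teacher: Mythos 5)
Your proposal is correct and follows essentially the same route as the paper: reduce to a convex geodesic via Theorem~\ref{thm:geounique}, take optimal couplings between $P_t$ and the third network, apply the identity $|(1-t)a+tb-c|^2=(1-t)|a-c|^2+t|b-c|^2-t(1-t)|a-b|^2$ pointwise to each of the three integrands, identify the cross term with $d_{\TpOT}(P,P')^2$ via the marginal conditions, and conclude by suboptimality of the projected couplings. The only difference is presentational (the paper adds $t(1-t)d_{\TpOT}(P^0,P^1)^2$ to both sides at the outset rather than expanding $d_{\TpOT}(P_t,R)^2$ directly), and your extra care with the admissibility of the pushed-forward couplings on the augmented spaces is a point the paper leaves implicit.
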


\begin{proof}
    Let $[P^t]$ be a geodesic in $\faktor{\mathcal{P}}{\sim_w}$ between $P^0$ and $P^1$. By Theorem~\ref{thm:geounique} we can assume that 
    \[
   P_t = \left( (\Tilde{X}, k_t, \pi^v), (\Tilde{Y}, \iota_t, \pi^e),  \omega_t\right),
    \]
    as in~\eqref{eq:TOPgeo}; in particular, $\pi^v, \pi^e$ are optimal for $P^0, P^1$. Let $ P' = \left( (X', k', \pi'), (Y', \iota', \nu'),  \omega' \right)$ be an arbitrary topological network, and let $\xi^v, \xi^e$ be optimal for $P^t$ and $P'$.  Then we have
    \begin{align}
        & d_{\rm TpOT, 2}(P^t, P')^2  + t(1-t)d_{\rm TpOT, 2}(P^0, P^1)^2   \nonumber \\
        &= \| \omega_t - \omega' \|^2_{L^2(\xi^v \otimes \xi^e)}  + \| k_t - k' \|^2_{L^2(\xi^v \otimes \xi^v)}  +  \| d_2 \circ (\iota_t, \iota')\|^2_{L^2(\xi^e )}   \nonumber \\
        &\quad + t(1-t) \left( \| \omega_0 - \omega_1 \|^2_{L^2(\pi^v \otimes \pi^e)} +  \| k_0 - k_1 \|^2_{L^2(\pi^v \otimes \pi^v)} + \| d_2 \circ (\iota_0, \iota_1)\|^2_{L^2(\pi^e )}  \right) \nonumber \\ 
        &= \| \omega_t - \omega' \|^2_{L^2(\xi^v \otimes \xi^e)}  +  \| k_t - k' \|^2_{L^2(\xi^v \otimes \xi^v)}  +   \| d_2 \circ (\iota_t, \iota')\|^2_{L^2(\xi^e )}   \nonumber \\
        &\quad + t(1-t) \left(  \| \omega_0 - \omega_1 \|^2_{L^2(\xi^v \otimes \xi^e)} +  \| k_0 - k_1 \|^2_{L^2(\xi^v \otimes \xi^v)} + \| d_2 \circ (\iota_0, \iota_1)\|^2_{L^2(\xi^e )}  \right) \label{eq:curvature} \\ 
        &= (1-t)\left(  \| \omega_0 - \omega' \|^2_{L^2(\xi^v \otimes \xi^e)} +  \| k_0 - k' \|^2_{L^2(\xi^v \otimes \xi^v)} +   \| d_2 \circ (\iota_0, \iota')\|^2_{L^2(\xi^e )}  \right) \nonumber \\
        &\quad + t \left( \| \omega_1 - \omega' \|^2_{L^2(\xi^v \otimes \xi^e)} +  \| k_1 - k' \|^2_{L^2(\xi^v \otimes \xi^v)} +   \| d_2 \circ (\iota_1, \iota')\|^2_{L^2(\xi^e )}  \right) \label{eq:curvature2} \\
        &\geq (1-t)d_{\rm TpOT, 2}(P^0, P')^2  + td_{\rm TpOT, 2}(P^1, P')^2, \label{eq:curvature3}
    \end{align}
where \eqref{eq:curvature} follows by marginal properties of $\xi^v$ and $\xi^e$, \eqref{eq:curvature2} by definition of $w_t, k_t$ and $\nu_t$, and \eqref{eq:curvature3} by sub-optimality. 
\end{proof}

\section{Numerical examples}\label{sec: results}

In this section, we demonstrate our computational framework and its capabilities on a range of numerical examples. In what follows, persistent homology computations are performed using the Julia package \texttt{Ripserer.jl}~\cite{vcufar2020ripserer} with the Vietoris-Rips filtration. We emphasise that the choice of filtration, representatives, and construction of the hypergraph incidence function $\omega$ influences the resulting transport plan and analysis. 

\begin{figure}[h!]
  \centering
  \includegraphics[width=\textwidth]{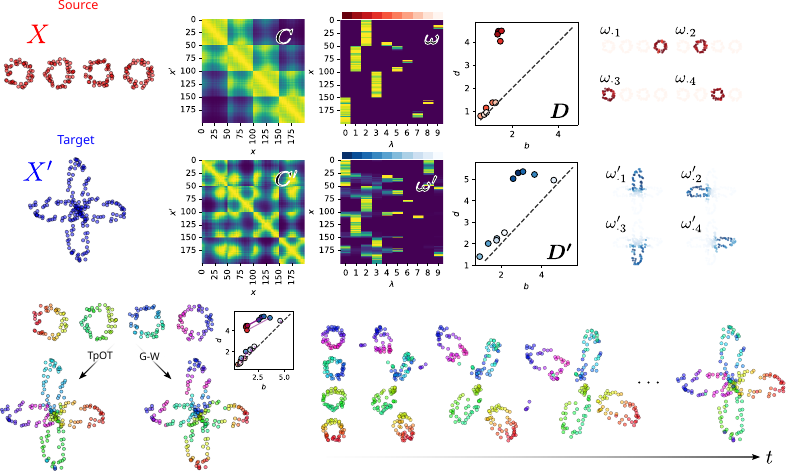}
  \caption{\textbf{(a)} Source point cloud $X$, \textbf{(b)} (i) Pairwise affinity matrix $C$ encoding geometric information; (ii) PH-hypergraph incidence matrix $\omega$ encoding the membership of points to persistence features; (iii) 1-dimensional persistence diagram $D$, points coloured by lifespan; (iv) columns of $\omega$ for the top 4 persistence features. Node-level features in $\omega$ are computed using TOPF \cite{grande2024node}, keeping the top 10 features. \textbf{(c)} Target point cloud $X'$. \textbf{(d)} Same as (b). \textbf{(e)} TpOT ($\alpha = 0.5, \beta = 1$) and GW matching. Inset: matching $\pi^e$ of the persistence diagrams $D, D'$ as found by TpOT. \textbf{(f)} A geodesic between the point clouds.}\label{fig:tpot_circles}
\end{figure}

\subsection{Matching point clouds}

In Figure \ref{fig:tpot_circles}, we consider a source point cloud $X$ consisting of four noisy circles of uniform size (Figure~\ref{fig:tpot_circles}(a)) and a target point cloud $X'$ resembling a ``flower'' with four noisy ellipses as ``petals'' (Figure~\ref{fig:tpot_circles}(c)). The point clouds $X$ and $X'$, together with the uniform distribution on points, are then measure spaces. We choose to represent geometric information by taking the affinity matrices (also referred to as gauge functions) to be Gaussian kernels $C, C'$, i.e. $C_{ij} = K(x_i, x_j)$ and $C'_{ij} = K(x_i', x_j')$ where
\[
  K(x, y) = \exp\left(- \frac{\| x - y \|_2^2}{h^2} \right).
\]
The bandwidth $h$ is chosen for each set of points $\{x_1, \ldots, x_N \}$ such that $h^2 N^{-2} \sum_{ij} \| x_i - x_j \|_2^2 = 1$. 

We compute $1$-dimensional persistent homology using the Vietoris-Rips filtration, and keep the top 10 features by persistence. We find that $\mathcal{K}_{\rm VR}(X)$ contains 4 significant, almost identical, homology classes (one per circle), and the same is true for $\mathcal{K}_{\rm VR}(X')$ (one class per ellipse). A choice of generating cycle for each class in the diagrams yields the PH-hypergraphs $H_X = H(X, \mathcal{K}_{\rm VR}(X), g_X )$ and $H_{X'} = H(X', \mathcal{K}_{\rm VR}(X'), g_{X'})$ and  the corresponding topological networks (see Section \ref{sec:discrete})
\begin{align*}
  P_X & =  \left( (X, C, \mu), (Y,\iota,\nu),  \omega   \right) \\ 
  P_{X'} &=  \left( (X', C', \mu'), (Y',\iota',\nu'),  \omega' \right).
\end{align*}

As we point out earlier, the choice of pointwise representation of persistence features can affect results greatly. To improve robustness to noise, we employ the TOPF approach of \cite{grande2024node} to compute the PH-hypergraph membership function for each persistence feature. In Figure~\ref{fig:tpot_circles}(b) and (d), we see, from left to right: the target and source point clouds, their pairwise affinity matrix, the PH-hypergraphs incidence matrices $\omega, \omega'$, their persistence diagrams, and a representation of their representative cycles as encoded in the corresponding PH-hypergraph.
Using the algorithms we developed in Section \ref{sec:discrete}, we are able to simultaneously find matchings between points and topological features in $P_X$ and $P_{X'}$ as the parameters $(\alpha, \beta)$ vary. 

We compute TpOT matchings of $P_X$ and $P_{X'}$ using Algorithm \ref{alg:tpot_entropic} with $\varepsilon_v = 3 \times 10^{-3}$, $\varepsilon_e = 10^{-2}$, $\alpha = 0.5$, $\beta = 1.0$. 
For comparison, we compute the (unregularised) Gromov-Wasserstein matching of $X$ an $X'$ using the PythonOT package \cite{flamary2021pot}. 

In Figure~\ref{fig:tpot_circles}(e), points are coloured by their corresponding source points under the TpOT and Gromov-Wasserstein matchings respectively. From this we observe that topological features (i.e. closed loops) fail to be preserved by the GW matching. Instead, each of the circles in the source point cloud is split up among multiple ellipses in the target point cloud. The TpOT matching, on the other hand, simultaneously preserves local adjacency and topological features.

\begin{figure}[ht]
  \centering
  \includegraphics[width=\textwidth]{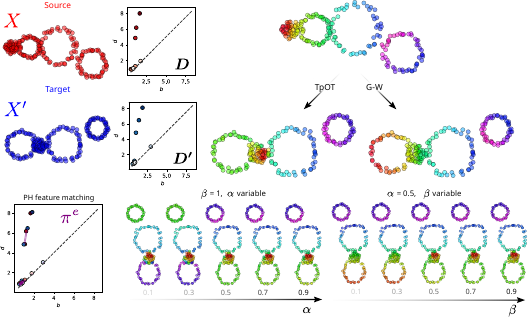}
  \caption{\textbf{(a)} Source point cloud $X$ and its persistence diagram $D$. \textbf{(b)} Target point cloud $X'$ and its persistent diagram $D'$. \textbf{(c)} TpOT ($\alpha = 0.5, \beta = 1$) and GW matching. \textbf{(d)} Matching $\pi^e$ of the persistence diagrams $D, D'$ found by TpOT. \textbf{(e)} The variation of TpOT matchings as the parameters $\alpha$ and $\beta$ vary.}\label{fig:noisy_disk}
\end{figure}

Given couplings $(\pi^v, \pi^e)$, Theorem~\ref{thm:geosame} allows us to explicitly construct a geodesic $[P_t], t \in [0, 1]$ between $P_X$ and $P_{X'}$. At each time $t \in [0, 1]$, the corresponding gauged measure space $P_t$ can be represented as a point cloud with a gauge function $\omega_t$ given by interpolation of $\omega$ and $\omega'$. In order to visualise the family of interpolating points, we consider the function $C_t((x, x'), (y, y')) \mapsto (1-t) C(x, y) + t C'(x', y')$ where $C(x, y)$ and $C'(x', y')$ are the squared Euclidean distances on the source and target point clouds respectively; this amounts to linear interpolation of $C$ and $C'$ in $L^2_{\pi^v \otimes \pi^v}((X \times X')^2)$.  
Since in practice we use an entropic regularisation, the resulting couplings $(\pi^v, \pi^e)$ are a priori strictly positive for all entries. We use the following heuristic sparsifying step, \rrrev{which was also used by \cite{han2023covariance}:} given a coupling $\pi_\varepsilon \in \Pi(\mu, \mu')$, we ``round'' it onto an extreme point of the coupling polytope by solving 
\[
    \max_{\pi \in \Pi(\mu, \mu')} \langle \pi_\varepsilon, \pi \rangle,
\]
which in itself amounts to solving a discrete optimal transport problem. This yields a sparse coupling approximating the solution found by entropic regularisation. 

At each value of $t$, positions of points in the interpolating point cloud are obtained by applying the multidimensional scaling (MDS) algorithm to $C_t$, followed by an alignment step to remove issues due to the invariance of MDS under rigid transformations. Figure~\ref{fig:tpot_circles}(f) shows a snapshot of this geodesic computed numerically for $\alpha = 0.5, \beta = 1$. We note that the geodesic almost perfectly recovers a local homeomorphism connecting each loop with their matched petal. 

\begin{figure}[ht]
  \centering
  \includegraphics[width=\textwidth]{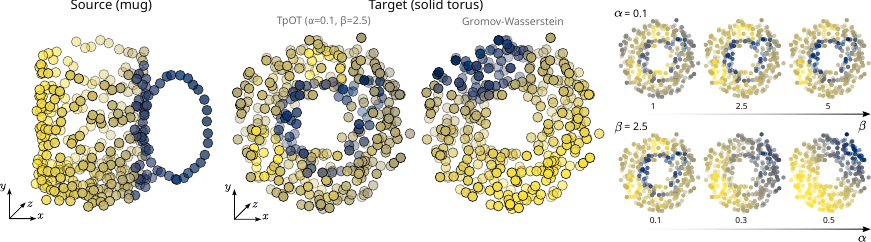}
  \caption{\textbf{(a)} Source point cloud: a noisy mug in $\mathbb{R}^3$. \textbf{(b)} Target point cloud, a noisy solid torus in $\mathbb{R}^3$, with matching induced by TpOT and GW distance, respectively. \textbf{(c)} The variation of TpOT matching as the parameters $\alpha$ and $\beta$ vary.}\label{fig:mug}
\end{figure}


In Figure \ref{fig:noisy_disk} we consider two point clouds $X, X'$ which have nearly indistinguishable persistence diagrams $D, D'$. As previously, we construct measure topological networks $P_X, P_{X'}$ with Gaussian affinity matrices and using TOPF with a Vietoris-Rips filtration \cite{grande2024node} for topological features. We compute the TpOT matchings as before (Algorithm \ref{alg:tpot_entropic}) with $\alpha = 0.5, \beta = 1, \varepsilon_v = 3 \times 10^{-3}, \varepsilon_e = 10^{-2}$.
The choice of parameters $\alpha$ and $\beta$ for balancing the weights for the TpOT terms (see Equation~\ref{eq:tpotdiscrete}) is a key factor influencing the behaviour of the matching. Of the three terms, the key one is the ``cross-term'' $\left\langle L(\omega, \omega'), \pi^v \otimes \pi^e \right \rangle$ corresponding to the co-cost function, since this determines the strength of coupling between the geometric and generator matchings. When this term is given a zero weight (\textit{i.e.}~when $\beta = 0$), the TpOT problem simplifies to that of independently finding geometric and generator matchings using a Gromov-Wasserstein and persistence diagram cost, respectively. Figure~\ref{fig:noisy_disk} shows the effect of varying $\beta$ on the example of Figure~\ref{fig:noisy_disk}. As $\beta$ increases, so does the strength of the coupling between these two terms. In the limit as $\beta \to \infty$, this term dominates the overall objective in \eqref{eq:tpotdiscrete}. In the limit we recover a partial matching variant of HyperCOT~\cite{chowdhury2023hypergraph}, in which hyperedges can be transported to a null edge (corresponding to the diagonal in the persistence diagram setting) for a cost equal to the squared $L^2$-norm of the corresponding incidence function. For positive $\beta$, the parameter $\alpha$ controls the relative contributions of the geometric distortion (measured in terms of a Gromov-Wasserstein loss, with coefficient $\alpha$) and topological distortion (measured in terms of transport cost on persistence features, with coefficient $1-\alpha$). For small values of $\alpha$ (e.g. $\alpha = 0.1$), Figure~\ref{fig:noisy_disk} the influence of the topological distortion in the PD-space is greater. As a result, TpOT matches points in cycles with similar lifespan. On the other hand, for higher values (e.g. $\alpha = 0.9$) the geometric distortion has greater influence, and we observe that the matching tends to preserve local proximity between loops, rather than their relative sizes. \\

\begin{figure}[ht]
  \centering
  \includegraphics[width=\textwidth]{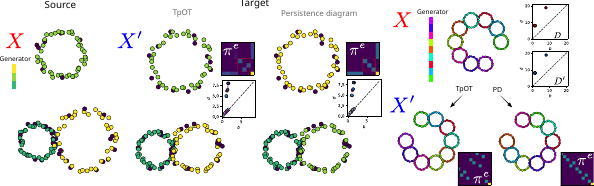}
  \caption{\textbf{(a)} A point cloud $X$ consisting of 3 noisy loops of various sizes. \textbf{(b)} On the left-hand side: a point cloud $X'$ consisting of the same 3 noisy loops as $X$, placed in different ways. Points forming the three persistent cycles in $\mathcal{K}_{\rm VR} (X)$ and $\mathcal{K}_{\rm VR} (X')$ are coloured according to TpOT-induced geometric matching. On the right-hand side: same as on the left, but points forming the three persistent cycles in $\mathcal{K}_{\rm VR} (X)$ and $\mathcal{K}_{\rm VR} (X')$ are coloured according to Wasserstein matching on persistent diagrams.  \textbf{(c)} A point cloud $X$ consisting of a chain of 9 noisy circles, matched to a copy of itself $X'$. Colour indicates cycle matching induced by TpOT-induced geometric matching and Wasserstein matching on persistent diagrams, respectively.}
  \label{fig:ex4}
\end{figure}

We show a more challenging example in Figure~\ref{fig:mug}, in which the objective is to transport a noisy point cloud $X$ resembling a mug (see Figure~\ref{fig:mug}(a)), into a noisy solid torus $X'$ (see Figure~\ref{fig:mug}(b)) whilst preserving the topological features found by persistent homology. In this context, matching of topological features amounts to mapping of the ``handle'' in $X$ into an \textit{essential} (i.e. not bounding a disk) closed curve in $X'$. The persistence diagrams of $X$ and $X'$ have only one significant class each. This implies that, outside of the points involved in the two chosen representative cycles, points in $X$ and $X'$ have almost trivial topological signal.
To construct measure topological networks $P_X, P_{X'}$, we construct the PH-hypergraph incidence functions $\omega, \omega'$ using the following smoothing procedure applied to a binary point-generator incidence function $\omega_{\mathrm{binary}}$:
  \[
    \omega_{\mathrm{smooth}} = \min_{x} \frac{1}{2} \| x - \omega_{\mathrm{binary}} \|_F^2 + \frac{\lambda}{2} \operatorname{tr}(x^\top L x) = (I + \lambda L)^{-1} \omega_{\mathrm{binary}},
  \]
  where we take $\lambda = 1$ and $L$ is a symmetrically normalised Laplacian operator constructed, in our case, from a Gaussian kernel on each point cloud. The pairwise affinity matrices $C, C'$ are also constructed using Gaussian kernels as done for the examples in Figures \ref{fig:tpot_circles}, \ref{fig:noisy_disk}. 

We compute the TpOT matchings using Algorithm \ref{alg:tpot_entropic} with $\varepsilon_v = 10^{-2}, \varepsilon_e = 10^{-2}$, and various choices of the parameters $\alpha, \beta$. For $\beta = 2.5$ and low values of $\alpha$ (e.g., $\alpha = 0.1$), the coupling $\pi^v$ matches points successfully maps the handle in an essential closed curve spanning the doughnut hole (Figure \ref{fig:mug}(b), left). When $\alpha$ increases (Figure~\ref{fig:mug}(c)), this effect is lost since the Gromov-Wasserstein component dominates.
On the other hand, fixing $\alpha = 0.1$ and allowing $\beta$ to vary, we find that the correspondence between generators is noisy for smaller values $\beta = 1$ but becomes stronger as $\beta$ increases. This can be understood as the increasing contribution of the coupling between points and topological features.

\begin{figure}[ht!]
\centering
\includegraphics[width=\textwidth]{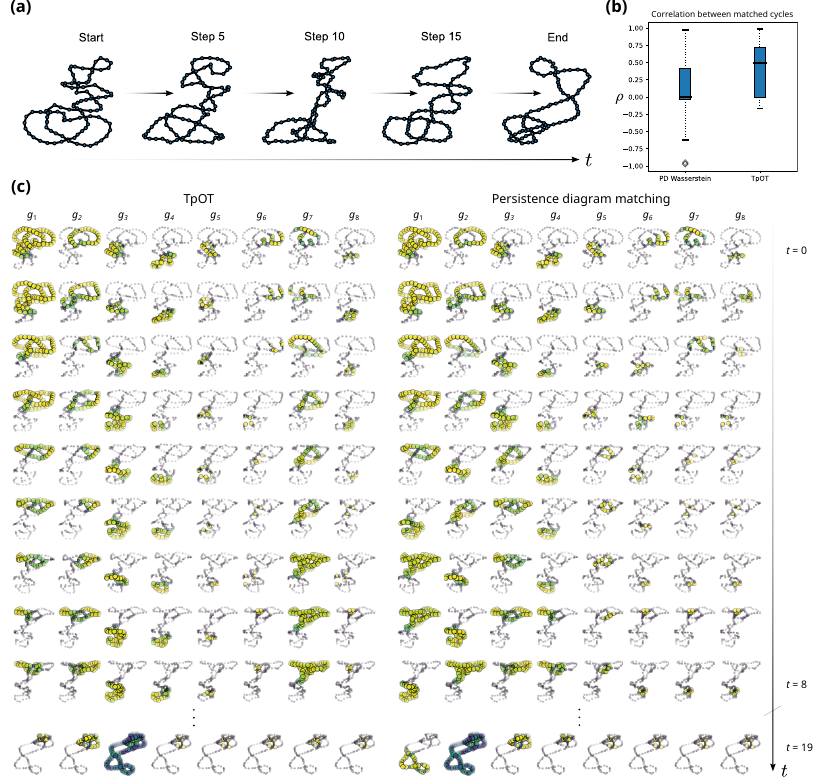}
\caption{\textbf{(a)} Snapshots of a trefoil undergoing thermal relaxation. \textbf{(b)} Box plot of correlation values between cycles and their images. \textbf{(c)} Step by step tracking of cycles via TpOT-induced geometric matching (left) and Wasserstein matching on persistence diagrams (right). }\label{fig:trefoil}
\end{figure}

\subsection{Geometric cycle matching}

The past few years have seen increasing efforts in addressing the problem of matching topological features across different systems, and many solutions have been proposed, including some inspired by techniques in optimal transport~\cite{li2023flexible, hiraoka2023topological, giusti2023signatures}. Perhaps, the simplest approach is to straightforwardly use the matching induced by Wasserstein-like distances on persistence diagrams, such as the bottleneck distance \cite{cohen2005stability} and the $d_{\mathrm{W},p}^\mathrm{PD}$ Wasserstein distances~\cite{chowdhury2019geodesics}. More nuanced solutions rely on statistical considerations~\cite{cohen2009persistent, reani2022cycle, garcia2022fast}, on the existence of maps between the initial data~\cite{bauer2013induced, gonzalez2020basis}, or, when such a map is unknown, on leveraging the algebraic topology of the PH construction to define a notion of dissimilarity between the underlying complexes~\cite{yoon2023persistent}. 

In our case, the topological feature coupling $\pi^e$ yields a matching between homology classes which is informed by the proximity of points creating the corresponding geometric cycles. We showcase this in Figure~\ref{fig:ex4} where we compare the standard Wasserstein matching on the persistence diagram space (``PD Wasserstein'') and our TpOT matchings. In Figure \ref{fig:ex4}(a-b) consider two point clouds $X$ and $X'$, each consisting of three cycles of different sizes and arranged differently in space. Consequently, the persistence diagrams of $X$ and $X'$ are nearly identical, despite the relative positioning of the cycles being different. Joint matching of geometric and topological features using TpOT produces a mapping between generators that preserves their layout. In contrast, Wasserstein matching on the persistence diagrams alone matches features by their persistence alone, breaking the spatial arrangement.
As a more extreme example, in Figure \ref{fig:ex4}(c) we consider two point clouds $X$, $X'$ with 9 identical loops each. As before, TpOT recovers a matching of generators that preserves their relative ordering while Wasserstein matching on the persistence diagrams produces a shuffled ordering. In all of these examples, measure topological networks $P_X, P_{X'}$ are constructed using affinity matrices of squared Euclidean distances and binary hypernetwork incidence function. TpOT matchings are computed as before using $\varepsilon_v = 3 \times 10^{-3}, \varepsilon_e = 10^{-2}, \alpha = 0.5, \beta = 1.0$.


Finally, we consider the problem of tracking topological features as a system evolves over time. Figure~\ref{fig:trefoil}(a) shows a piecewise linear spatial curve forming a trefoil knot as it undergoes thermal relaxation. The simulation consists of $20$ steps generated using KnotPlot~\cite{scharein2002interactive}. At each time $t_i$, the knot takes a spatial configuration $C_{i}$, that can be represented as the point cloud $X_{i}$ of its vertices. At each step, we construct from $X_i$ a measure topological network $P_{X_i}$ obtained by computing 1-dimensional PH. Similar to the example in Figure \ref{fig:mug}, we choose to use a Gaussian kernel for pairwise affinity matrices, and use the Laplacian-smoothed binary incidence matrices. We can then match homology classes between consecutive temporal instants $(t_i, t_{i+1})$ by either Wasserstein matchings on persistence diagrams or TpOT. Since the ground truth correspondence of points in the curve is known, in Figure \ref{fig:trefoil}(b) we show the correlation between generators. We find that the TpOT matching significantly improves the accuracy of generator matchings compared to PD Wasserstein.  Finally, in Figure \ref{fig:trefoil}(c) we show the images of significant generators under the mappings found by PD Wasserstein as well as TpOT. We find that the TpOT matching provides a clear improvement over the PD Wasserstein matching. 

Reliably tracking topological features as a system evolves over time offers the key advantage of identifying structural features resilient to dynamic changes. For spatial curves, this capability has clear implications for studying the folding and dynamics of (bio)polymers. In a polymer with a specific—potentially knotted—spatial configuration, are regions that take longer to untangle biophysically significant, and if so, why? A natural setting where this question is particularly relevant is the case of proteins. Folding intermediates are difficult to obtain, both experimentally and via simulations, and similar topological considerations can inform and enhance the analysis of protein folding.

\begin{remark}
Consider two point clouds, and the convex $GW$ geodesic connecting them (see~\cite{sturm2023space}). Consider now the Vietoris-Rips filtration along the geodesic path. A natural question is whether the path induced by the persistent diagrams follows a convex geodesic. A counterexample to this question is given by Figure~\ref{fig:tpot_circles}, as the GW-mapping between points suggests that the GW-geodesic ``breaks'' the loops, and thus, the four persistent homology classes. This can be verified by computing the path of Vietoris-Rips PDs obtained by inputting the various interpolation metrics. The same question applies for $TpOT$-geodesics. In this context, the answer depends on whether the $GW$-term contributes to cycle matching or not. When it doesn't (\textit{e.g.}~Figure~\ref{fig:tpot_circles}(E)), persistent diagrams computed from Vietoris-Rips filtration along the TpOT-geodesic do indeed follow the convex-geodesic in the persistent diagram space. When GW contributes (\textit{e.g.}~Figure~\ref{fig:noisy_disk} and Figure~\ref{fig:ex4}), then the persistent diagram path is non-geodesic. 
\end{remark}

\subsection{Higher-degree homology}
Note that the TpOT approach is not constrained to 1D PH; it can be applied more broadly to higher-dimensional PH data. The structure and complexity of PH-hypergraphs varies with the homology dimension~\cite{barbensi2022hypergraphs}, and for data lying in higher-dimensional spaces, the information from homology in higher degrees may provide more nuanced insights. As a concrete example, recent work by a subset of the authors and collaborators demonstrates that 2-dimensional cycles are effective in detecting damaging mutations in protein structures~\cite{madsen2023topological}. For clarity and conciseness, and because this setting is directly relevant to the experiments we present, we have focused on the 1D case.

\subsection{Computational complexity}
Our method computes an approximate solution of a non-convex quadratic program, which is NP-hard to solve exactly. Our iterative algorithm for finding an approximate solution relies on solving successive linear optimal transport subproblems, which are at least $O(n^3 \log n)$ in the exact case. The entropy regularised case is slightly more efficient, see~\emph{e.g.}~\cite{peyre2019computational}. We remark that our method currently stores full cost matrices in memory and is thus $O(n^2)$ in space; for large point clouds speedups could be used. Persistent homology computations are known to be computationally very expensive (see~\textit{e.g.}~\cite{distributing, reduced}). As an example, the computation of first-degree homology requires analysing $O(n^3)$ $2$-simplices. For these computations, we are using the \texttt{Julia} package \texttt{Ripserer}~\cite{vcufar2020ripserer}, one of the top performing softwares currently available.

\subsection{Generating cycles}
Constructing PH-hypergraphs, and thus the TpOT pipeline, depends on computing explicit cycles representing homology classes. It is well known that, even in the case of simplicial complexes, generating cycles are far from being unique, and that different choices can potentially bring important biases in the subsequent analysis~\cite{LiMinimalCycle}. Ideally, one would want to work with some \textit{minimal} generating basis, or have some sort of \textit{preferred} or \textit{best-practice} method to choose cycle representatives. Unfortunately, in general, finding minimal generators is an NP-hard problem~\cite{Dey2019, LiMinimalCycle}. In practice, we observe good results with the representatives found by the software \texttt{Ripserer}~\cite{vcufar2020ripserer}, a package which has enjoyed widespread adoption in the TDA community. In addition to that, the experimentally shown robustness of the hyperTDA method~\cite{barbensi2022hypergraphs} suggests that, when dealing with systems sufficiently complex from a topological point of view, the qualitative behaviour of a TpOT induced matching should be robust by changes to the representative cycles.  Further, the implementation of the approach by Grande and Schaub~\cite{grande2024node} assures an increased robustness of the overall pipeline. 

Given their potential and demonstrated usefulness, the search for new methods of producing ``good'' representatives, and the research into homology generators in general~\cite{ChenGenerators, Dey2019, emmett2015multiscale,  LiMinimalCycle, Obayashi2018VolumeOC, Ripser_involuted, barbensi2022hypergraphs}, is a very active sub-field of topological data analysis. New, important advancements in this area currently provide a wide variety of algorithms to compute representatives, that can all be implemented in the TpOT pipeline, depending on the specific goal or application.

\section{Conclusion}\label{sec:conclusions}

Inspired by recent advances in computational topology~\cite{barbensi2022hypergraphs} and optimal transport for structured objects~\cite{vayer2020fused, redko2020co,chowdhury2023hypergraph}, we propose to encode geometric and topological information about a point cloud jointly in a hypergraph structure. Our approach lays the foundation for a topology-driven analysis of the geometry of gauged measure spaces. As in the co-optimal setting~\cite{redko2020co,chowdhury2023hypergraph}, the TpOT problem outputs a pair of coupled transport plans. The first one provides a matching of the underlying metric spaces (point clouds) that preserves topological features. The second one matches the topological features (cycles representing persistent homology classes) in a way that maintains local metric structure. This geometric cycle matching represents a key novelty of our method, and, to the best of our knowledge, it is the first one taking into account the geometric nature of data, and the spatial interconnectivity of generating cycles.

\subsection*{Data availability}

Data and software used to produce results in this paper are available at the GitHub repository \url{https://github.com/zsteve/TPOT}

\subsection*{Acknowledgements}

SYZ gratefully acknowledge funding from the Australian Government Research Training Program and an Elizabeth and Vernon Puzey Scholarship. MPHS is funded through the University of Melbourne DRM initiative, through an ARC Laureate Fellowship (FL220100005) and acknowledges financial support from the Volkswagen Foundation through a ``Life?'' program grant. TN was partially supported by NSF grants DMS 2107808 and DMS 2324962.

\section*{Declarations}
The authors declare no conflict of interest or competing interests. 

\bibliographystyle{plain}

\end{document}